\newtheorem{thm}{Theorem}[section]
\newtheorem{lem}[thm]{Lemma}
\newtheorem{fact}[thm]{Fact}
\newtheorem{ques}[thm]{Question}
\newtheorem{defn}[thm]{Definition}
\theoremstyle{remark}
\newtheorem{rem}[thm]{Remark}
\newtheorem{exam}[thm]{Example}
\newcommand{\im}{invariant measure}
\newcommand{\sq}{sequence}
\newcommand{\z}{\mathbb Z}
\newcommand{\na}{\mathbb N}
\newcommand{\s}{\mathbb S}
\newcommand{\p}{\mathcal P}
\newcommand{\U}{\mathcal U}
\newcommand{\R}{\mathcal R}
\newcommand{\Pa}{\mathcal P}
\newcommand{\B}{\mathcal B}
\newcommand{\htop}{\mathbf h_{\mathsf{top}}}
\newcommand{\tl}{topological}
\newcommand{\ds}{dynamical system}
\newcommand{\xt}{$(X,T)$}
\newcommand{\xtp}{$(X',T')$}
\newcommand{\ys}{$(Y,S)$}
\newcommand{\xmt}{$(X,\mu,T)$}
\newcommand{\yns}{$(Y,\nu,S)$}
\newcommand{\mtx}{\mathcal M_T(X)}
\newcommand{\msy}{\mathcal M_S(Y)}
\newcommand{\diam}{\mathsf{diam}}
\newcommand{\Per}{\mathsf{Per}}
\newcommand{\id}{\mathsf{id}}
\numberwithin{equation}{section}
\begin{document}

\baselineskip=12pt
\title{Dynamics in dimension zero\\ \small A survey}

\author{Tomasz Downarowicz}
\address{Faculty of Mathematics and	Faculty of Fundamental Problems of Technology, Wroclaw University of Technology, Wroclaw, Poland}
\email{Tomasz.Downarowicz@pwr.edu.pl}

\author{Olena Karpel}
\address{B. Verkin Institute for Low Temperature Physics and Engineering, Kharkiv, Ukraine\\
{\em Current address: Department of Dynamical Systems, Institute of Mathematics of Polish Academy of Sciences, Wroclaw, Poland}}
\email{helen.karpel@gmail.com}

\begin{abstract}
The goal of this paper is to put together several techniques in handling dynamical systems on zero-dimensional spaces, such as array representation, inverse limit representation, or Bratteli--Vershik representation. We describe how one can switch from one representation to another. We also briefly review some more recent related notions: symbolic extensions, symbolic extensions with an embedding, and uniform generators. We devote a great deal of attention to marker techniques and we use them to prove two types of results: one concerning entropy and vertical data compression, and another, about the existence of isomorphic minimal models for aperiodic systems. We also introduce so-called decisiveness of Bratteli--Vershik systems and give for it a sufficient condition.
\end{abstract}

\maketitle
\markboth{}{}

\section{Introduction}
Zero-dimensional dynamical systems is a huge class, which is in many aspects universal, i.e., every \ds\ has a zero-dimensional ``counterfeit'' possessing most of its interesting dynamical properties. The fundamental result of this kind is the Jewett-Krieger Theorem \cite{J70, Kr72}, which associates to every ergodic automorphism a strictly ergodic (minimal and uniquely ergodic) topological model, which is in fact zero-dimensional. The same fact has been proved for endomorphisms by A. Rosenthal \cite{Ro88}. In the category of smooth \ds s, zero-dimensional systems often appear as attractors or supports of \im s, i.e., as subsystems, where most of the important dynamics happens. The Julia set, in some classes of meromorphic
functions on Riemann surfaces, is zero-dimensional (see e.g. \cite{DHSi14}).
From the point of view of entropy it is fully sufficient to understand zero-dimensional dynamics. This is due to the fact, that every \tl\ \ds\ admits a zero-dimensional faithful (every \im\ lifts to a unique preimage) and principal (preserving entropy of each \im) extension (see \cite{DH12}). Such an extension inherits all imaginable entropy properties of the underlying system.
\smallskip

But there is still a lot to do, as far as finding zero-dimensional representations is concerned. It is a pending open question whether all of topological dynamics can be realized in dimension zero, up to sets of measure zero for all \im s (under the obvious assumption that periodic points form a zero-dimensional subset). One of possible solutions could be finding, for every such system, an isomorphic zero-dimensional extension, i.e., one which is faithful and such that each ergodic measure is dynamically isomorphic to its unique preimage. This problem remains wide open since many decades (the positive answer is known only for systems which have so-called small boundary property, see \cite{L89,LW00}).
\smallskip

The class of zero-dimensional systems includes systems studied since the origins of ergodic theory and discrete dynamical systems, namely subshifts. They are the ``building blocks'' of all zero-dimensional dynamical systems, and symbolic dynamics is the branch of dynamics involved with subshifts. Historically, a subshift appeared (implicitly) for the first time in a work of Jacques S. Hadamard about geodesic flows on surfaces with negative curvature \cite{Had98}. The idea was later developed by Harold M. Morse, leading to the creation of symbolic dynamics, which becomes a separate field after the publication of his joint book with Gustav A. Hedlund \cite{MH38}. For a more contemporary approach to symbolic dynamics see also \cite{LM95}.
Subsequently, subshifts have been studied for their algebraic, combinatorial, number-theoretic, measure-theoretic, entropy, and complexity aspects, and many more, whose list is practically endless. It is impossible not to mention here Fustenberg's correspondence principle \cite{F77, F81}, which allows to study many combinatorial properties of subsets of natural numbers by viewing the subshifts generated by their characteristic functions treated as elements of the $\{0,1\}$-symbolic space. Ramsey Theory is the field where this principle finds most of its applications (see e.g. \cite{R30,Ber96,Ber03}). Subshifts play the key role in the analysis of the majority of other dynamical systems. As an off hand evidence let us mention here the analysis of the logistic family with help of the kneading sequence and its generated subshift (see e.g. \cite{MT88,CE80}). Subshifts are the most manageable in constructing systems exhibiting rare phenomena, hence they appear naturally in many examples. The same applies to the more general class of zero-dimensional systems. For instance, it has been proved that any Choquet simplex can be realized as the set of \im s in some minimal subshift (\cite{Do91}).
This result has been later extended to realizing also any prescribed affine nonnegative function as the entropy function on \im s (upper semicontinuous in subshifts, and of the Young class LU in zero-dimensional systems, see \cite{DS03}).

For the above reasons, it is desirable to find, for a general \tl\ \ds, not only its zero-dimensional, but preferably a symbolic extension, which would reflect its dynamical properties in the best possible way. The existence of just any symbolic extension has led to the development of the theory of symbolic extension entropy \cite{BD05, Do05}. The existence of a principal symbolic extension turns out to be equivalent to an older notion of asymptotic $h$-expansiveness introduced in \cite{Mi76}. Since J. Buzzi \cite{Buz97} proved that every $C^\infty$ diffeomorphism of a smooth Riemannian manifold is asymptotically $h$-expansive, the notion gained a lot of attention. Further, with an (inevitable) additional assumption on periodic points, asymptotic $h$-expansiveness yields the existence of not only a principal but in fact an isomorphic symbolic extension \cite{B15}, which is the strongest possible connection to a symbolic system (not counting \tl\ conjugacy). For systems which are not asymptotically $h$-expansive, it is still possible to have a symbolic extension \emph{with an embedding}, i.e. one which contains a noncompact (measurable) subsystem which is an isomorphic extension of the given system. The existence of such an extension can be expressed in terms of uniform generators, i.e.,  measurable partitions which separate points in a uniform manner (see \cite{BD16}).

Among zero-dimensional systems which are not subshifts, perhaps the most studied class is that described by Bratteli diagrams. These were introduced in \cite{Br72} for the classification of approximately finite (AF) $C^*$-algebras, and showed to be extremely useful in Cantor, Borel and measurable dynamics. The ideas of A. Vershik \cite{vershik:1981, vershik:1982} led to a realization of any ergodic automorphism of the standard measure space as a transformation acting on the (always zero-dimensional) path space of a Bratteli diagram. Dynamical systems obtained in this way are called Bratteli--Vershik models. Later, R. Herman, I. Putnam, and C. Skau \cite{HPS92} proved that any minimal homeomorphism of a Cantor set can be realized as a Vershik map acting on the path space of a Bratteli diagram. Such realization provided a convenient tool for describing the simplex of invariant probability measures (see e.g. \cite{BKMS_2010,BKMS_2013,ABKK16}) and orbit equivalence classes (see \cite{giordano_putnam_skau:1995,glasner_weiss:1995-1,giordano_putnam_skau:2004,GMPS08,GMPS10,HKY12}). As for applications of Bratteli diagrams in Borel dynamics see, for instance, \cite{BDK06}.
\medskip

The goal of this paper is to put together several techniques in handling zero-dimensional dynamical systems, such as array representation, inverse limit representation, or Bratteli--Vershik representation. We describe how one can switch from one representation to another. We also briefly review symbolic extensions, symbolic extensions with an embedding, and uniform generators. We devote a great deal of attention to marker techniques, based on a lemma attributed to Krieger (which can be found in \cite{Bo83}). Markers have shown extremely useful in data compression and entropy calculations. They are heavily used in creating symbolic extensions. In this paper we use them to prove two types of results: one concerning entropy and vertical data compression, and another, about the existence of isomorphic minimal models for aperiodic systems. Finally, we discuss Bratteli--Vershik models and for these we introduce the notion of decisiveness. We give a sufficient condition for a zero-dimensional system to posses a decisive Bratteli--Vershik model.

\section{Basic concepts}

This section contains the description and basic connections between notions crucial in zero-dimensional dynamics.

\subsection{Subshifts, symbolic extensions, uniform generators}

Subshifts are the most elementary zero-dimensional systems. In this subsection we explain when, how and to what extent an abstract system can be modeled by a subshift. In the sequel we will consider \tl\ \ds s \xt, where $X$ is a compact metric space and $T:X\to X$ is a continuous map. The map $T$ generates the action of the semigroup $\na_0$ of non-negative integers on $X$, by the formula $n\mapsto T^n$ ($n\in\na_0$). If $T$ is a homeomorphism, it also generates (by the same formula) the action of the group $\z$ of all integers. Since in most of our considerations we do not want to restrict to just one of the two possible actions, we will use $\s$ to denote any element of the set $\{\na_0,\z\}$.\footnote{Many of the techniques presented in this paper apply equally well to actions of other groups or semigroups, however, in this note we focus on actions of a single transfromation.}

\begin{defn}
Let $\Lambda$ be finite set (called an \emph{alphabet}) endowed with the discrete topology. By a \emph{subshift} over $\Lambda$ we mean any closed, shift-invariant subset $X\subset\Lambda^\s$, endowed with the product topology (and some compatible metric). Each element of $X$ is pictured as a \sq\ $x=(x_n)_{n\in\s}$ of \emph{symbols} from $\Lambda$. The subshift is by default regarded with the action
of the shift transformation $S$ defined by $Sx=y$, where $\forall n\in\s\ y_n = x_{n+1}$.
\end{defn}

\begin{defn} Let $\p$ be a finite Borel-measurable partition of $X$ and let $\Lambda$ be a finite alphabet which bijectively labels the elements of $\p$, i.e., $\p = \{P_a:a\in\Lambda\}$. By a $\p$-name of $x\in X$ we shall mean the \sq\ $(x_n)_{n\in\s}\in \Lambda^\s$ obtained by the rule
$$
x_n = a \iff T^n(x)\in P_a.
$$
\end{defn}
Notice that the closure of all $\p$-names is a subshift over $\Lambda$.

\begin{defn}
A \tl\ \ds\ \xt\ (with the action of $\s$) is \emph{expansive} (with expansive constant $\epsilon > 0$) if
for any $x,y\in X$
$$
x\neq y\implies \exists n\in\s\ \ d(T^nx,T^ny)\ge\epsilon.
$$
\end{defn}

\begin{thm}[Hedlund's theorem \cite{H69}]
A system is (\tl ly conjugate to) a subshift if and only if it is zero-dimensional and expansive.
\end{thm}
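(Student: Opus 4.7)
The plan is to prove both implications, with all the work being in the converse direction. For the forward direction, any subshift $X\subset\Lambda^\s$ inherits zero-dimensionality from the product topology on a countable power of a finite discrete alphabet, and is expansive: if $x\neq y$ in $X$, they differ in some coordinate $n$, and any $\epsilon$ strictly less than the distance between the cylinders $[a]$, $[b]$ at coordinate $0$ (for $a\neq b$) witnesses expansivity, since $S^n x$ and $S^n y$ then lie in different such cylinders.

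For the converse, suppose \xt\ is zero-dimensional and expansive with expansive constant $\epsilon$. First I would use compactness plus zero-dimensionality to produce a finite Borel partition $\p=\{P_a:a\in\Lambda\}$ of $X$ into clopen cells, each of diameter less than $\epsilon$: cover $X$ by clopen sets of diameter less than $\epsilon$ (using that $X$ has a basis of clopen sets and diameters tend to zero), extract a finite subcover $U_1,\dots,U_k$, and disjointify by setting $P_i=U_i\setminus\bigcup_{j<i}U_j$, discarding empty cells. Next I would take the $\p$-name map $\phi:X\to\Lambda^\s$ defined as in the definition above. Continuity of $\phi$ follows from the fact that each set $\{x:\phi(x)_n=a\}=T^{-n}P_a$ is clopen, and the equivariance $\phi\circ T=S\circ\phi$ is built into the definition. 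Injectivity is exactly where expansivity is used: if $\phi(x)=\phi(y)$ then for every $n\in\s$ the points $T^nx$ and $T^ny$ lie in a common cell $P_a$, hence $d(T^nx,T^ny)<\epsilon$; expansivity forces $x=y$.

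With $\phi$ continuous, injective, and equivariant, the rest is formal. The image $\phi(X)$ is compact (continuous image of a compact set), hence closed in $\Lambda^\s$, and it is shift-invariant by equivariance, so it is a subshift in the sense of the definition above. Being a continuous bijection from a compact space to a Hausdorff space, $\phi$ is a homeomorphism onto $\phi(X)$, providing the desired topological conjugacy. The only genuinely substantive step in the whole argument is the construction of the clopen partition of diameter less than $\epsilon$; once that is in hand, every other step is routine. In the one-sided case $\s=\na_0$ the same reasoning applies without change, the expansivity hypothesis there only quantifying over $n\in\na_0$.
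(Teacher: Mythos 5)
Your proof is correct and follows essentially the same route as the paper's: construct a finite clopen partition of mesh less than the expansive constant, and check that the $\p$-name map is a continuous, equivariant injection, hence a conjugacy onto its (subshift) image by compactness. You simply supply more detail at each step (the disjointification of the clopen cover, the explicit injectivity argument) than the paper does.
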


\begin{proof}
The implication $\implies$ is obvious. Conversely, let $\epsilon$ be the expansive constant. There exists a finite clopen partition $\p$ of $X$ into sets of diameter $<\epsilon$. Now, the map $x\mapsto\p$-name of $x$ is continuous and injective and intertwines the action with the shift. So, it is a conjugacy with the
image, which is a subshift over an alphabet $\Lambda$ bijectively labeling $\p$.
\end{proof}

It follows from the above, that the range of applicability of topological conjugacy as a tool for symbolic representation of \ds s is very limited. A system conjugate to a subshift is practically a subshift. Much wider variety of \ds s can be symbolically represented by means of symbolic extensions. A symbolic extension of a system \xt\ carries all the information, both measure-theoretic and \tl, necessary to reconstruct \xt. Unfortunately, typically it contains also a large amount of superfluous information, seemingly useless for this task. A natural desire is to maximally reduce this amount. One of ways is by minimizing the \tl\ entropy of the extension. Faithfulness of the extension is also a desirable property, as is reduces the amount of preimages of each orbit. Let us give the formal definitions.

\begin{defn}
Let \xt\ be a \tl\ \ds. By a \emph{symbolic extension} we mean a subshift \ys\ together with a \tl\ factor map $\pi:(Y,S)\to(X,T)$.
\end{defn}

\begin{defn}
An extension $\pi:(Y,S)\to(X,T)$ is \emph{faithful} if the adjacent map on \im s $\nu\mapsto\pi(\nu)$
defined by $\pi(\nu)(A)=\nu(\pi^{-1}(A))$ (which is always a surjection from the set $\msy$ of $S$-invariant probability measures on $Y$ to the analogous set $\mtx$) is injective.
\end{defn}

The existence and smallest possible entropy of a symbolic extension of a given system \xt\ is in general difficult to decide. It is subject of the \emph{theory of symbolic extensions and entropy structures} (see e.g. \cite{BD05} or \cite{Do11}). It depends on subtle entropic properties of \im s. Typically, unless the underlying system
\xt\ has a special property called asymptotic $h$-expansiveness, any symbolic extension \ys\ will either have larger \tl\ entropy than \xt, or at least some \im s on $Y$ will have larger entropy than their images operating on $X$. We choose to skip the technical definition of asymptotic $h$-expansiveness and refer to the original paper \cite{Mi76}. Let us only formulate the associated result, which was first proved in \cite{BFF02}:

\begin{thm}\label{ahe}
A system \xt\ is asymptotically $h$-expansive if and only if it admits a symbolic extension $\pi:(Y,S)\to(X,T)$ such that for every $\nu\in\msy$, $h_\nu(S) = h_\mu(T)$, where $\mu=\pi(\nu)$.
\end{thm}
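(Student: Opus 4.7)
The plan is to prove the two implications separately, using Misiurewicz's tail (topological conditional) entropy $h^*(T)$, which is known to vanish exactly when a system is asymptotically $h$-expansive.

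For the easy direction $(\Leftarrow)$: Suppose such a principal symbolic extension $\pi:(Y,S)\to(X,T)$ exists. Since $(Y,S)$ is a subshift, it is expansive, and expansive systems satisfy $h^*(S)=0$ (Misiurewicz). The hypothesis $h_\nu(S)=h_{\pi(\nu)}(T)$ for every $\nu\in\msy$ says that $\pi$ has no entropy defect along fibers. Combined with a Ledrappier--Walters-type inequality linking $h^*(T)$ to $h^*(S)$ and the fiber entropy defect of $\pi$, one concludes $h^*(T)\le h^*(S)=0$, hence \xt\ is asymptotically $h$-expansive.

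For the hard direction $(\Rightarrow)$: Assume $h^*(T)=0$. I would construct a refining sequence of finite Borel partitions $\p_k$ of $X$ with $\diam(\p_k)\to 0$ and negligible ``boundary'' or ``tail'' contribution in every $\mu\in\mtx$. Using $h^*(T)=0$, one inductively chooses $\p_k$ subordinate to an open cover of mesh less than $1/k$, with boundaries lying in sets of vanishing tail entropy. Let $\phi_k:X\to A_k^\s$ be the $\p_k$-name map (with finite alphabet $A_k$ labeling $\p_k$) and let $Y_k$ be the shift-closure of $\phi_k(X)$. Set $Y\subset\prod_k Y_k$ to be the closure of $\{(\phi_k(x))_{k\in\na}:x\in X\}$; this is a subshift over a product alphabet. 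Because $\diam(\p_k)\to 0$, the projection $\pi:Y\to X$ determined by passage through nested cells is well-defined, continuous and surjective, and the vanishing boundary effects force $\pi$ to be faithful and to satisfy $h_\nu(S)=\sup_k h_\mu(T,\p_k)=h_\mu(T)$ for $\mu=\pi(\nu)$.

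The main obstacle is the construction of the partitions $\p_k$ whose boundaries have negligible entropic impact simultaneously for every \im. The hypothesis $h^*(T)=0$ is precisely what permits refining arbitrarily fine open covers into Borel partitions whose boundary entropy vanishes, so that passage to $\p_k$-names preserves the entropy of every \im. Controlling the accumulation of boundary effects under successive refinement is the core technical step and is where the argument of \cite{BFF02} does the real work; a naive inverse-limit construction would typically produce a symbolic extension with strictly larger fiber entropy on some measures.
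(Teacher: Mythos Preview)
First, note that the paper does not prove this theorem; it merely states the result and attributes it to \cite{BFF02}. So there is no proof in the paper to compare your attempt against.

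That said, your hard direction $(\Rightarrow)$ contains a fundamental gap. The space $Y\subset\prod_k Y_k$ you construct is \emph{not} a subshift: it is an array system over the infinite sequence of alphabets $A_1,A_2,\dots$, and what you call the ``product alphabet'' $\prod_k A_k$ is infinite. A symbolic extension, by definition, is a subshift over a \emph{finite} alphabet. What your construction actually yields is a zero-dimensional principal extension, and such an extension exists for \emph{every} topological dynamical system with no hypothesis whatsoever (this is precisely the content of the result from \cite{DH12} quoted later in the paper). Your use of $h^*(T)=0$ to control ``boundary effects'' of the partitions $\p_k$ is beside the point: even with perfectly chosen partitions, the inverse limit of the $Y_k$ is not symbolic. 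The genuine content of \cite{BFF02} is an encoding that collapses the infinite tower of row-subshifts into a single finite-alphabet subshift while keeping the extension principal, and it is in that encoding step that asymptotic $h$-expansiveness is actually consumed. That step is entirely absent from your sketch, and your closing paragraph misidentifies where the difficulty lies.

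Your easy direction $(\Leftarrow)$ is plausible as an outline, though the ``Ledrappier--Walters-type inequality'' you invoke should be stated precisely if it is to carry the argument.
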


An extension as above is called \emph{principal}. In \cite{Ser12}, J. Serafin was able to show that every symbolic extension\footnote{In Serafin's paper, there is aperiodicity assumption. But later, in \cite{DH12}, it was shown that this assumption can be dropped.} can be replaced by one which is faithful and has the same entropy properties. So, the principal extension in Theorem \ref{ahe} can be chosen faithful. A principal and faithful symbolic extension can be considered a symbolic model which is perfect from the point of view of information theory. However, it is not perfect from the point of view of dynamics, because preservation of entropy (for each \im) is far from preservation of dynamics. The most desirable situation occurs when the symbolic extension is \emph{isomorphic}, as defined below:

\begin{defn}
An extension $\pi:(Y,S)\to(X,T)$ (not necessarily symbolic) is called \emph{isomorphic} is it is faithful
and for every $\nu\in\msy$ the map $\pi$ serves as an isomorphism between the measure-preserving systems $(Y,\nu,S)$ and $(X,\mu,T)$ (here $\mu=\pi(\nu)$ and we have skipped the Borel sigma-algebras in the denotation).
\end{defn}

An isomorphic extension is the best symbolic representation of a non-symbolic (not expansive or not zero-dimensional) system one can imagine. It preserves the affine-topological structure of the simplex of \im s and the dynamics (up to measure-theoretic isomorphism) individually for each \im. The existence of such a perfect symbolic extension is described by the following strengthening, proved by D. Burguet, of Theorem \ref{ahe} (see \cite{B15}):

\begin{thm}
If \xt\ is asymptotically $h$-expansive and aperiodic (i.e., contains no periodic orbits) then it admits an isomorphic symbolic extension.
\end{thm}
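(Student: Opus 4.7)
My plan is to begin from the principal symbolic extension guaranteed by Theorem \ref{ahe} and then refine it, using Krieger-style marker techniques, so that fibrewise the extension becomes a measure-theoretic isomorphism, without increasing the \tl\ entropy.

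First, I would apply Theorem \ref{ahe} together with Serafin's refinement to obtain a faithful principal symbolic extension $\pi_0\colon(Y_0,S_0)\to(X,T)$. Such $\pi_0$ preserves the entropy of every \im, but its fibres may remain non-trivial, so $\pi_0$ need not induce a measure-theoretic isomorphism. The goal is then to enrich the symbolic coordinate by extra layers that jointly distinguish points of each fibre on a set of full $\nu$-measure for every $\nu\in\msy$. If $X$ were zero-dimensional, one could simply join $Y_0$ with the names under a refining sequence of finite clopen partitions $\U_1,\U_2,\dots$ of $X$; as these are factors of $(X,T)$, they would contribute no entropy. In the general case one first passes to a zero-dimensional principal extension of $(X,T)$, whose existence is recalled in the introduction, and works there.

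Second, I would fold the separating partitions $\U_k$ into the symbolic extension without losing the subshift property. Directly joining infinitely many clopen partitions produces an inverse limit of subshifts rather than a single subshift, so the layers have to be bundled into one symbolic coordinate. Here aperiodicity intervenes through Krieger's marker lemma: it furnishes, for every $k$, a clopen set $F_k$ whose return times are all at least $N_k$, hence a Rokhlin-type tower $\bigsqcup_{j<r}T^jF_k$ covering $X$. A long initial segment of the $\U_k$-name starting from each visit to $F_k$ can then be packaged into a single auxiliary symbol attached to the base of the column and read off from that symbol throughout the column. The entropy cost of this layer is bounded by $\log|\U_k|/N_k$; choosing $N_k$ growing fast enough makes the total added entropy zero, so the joint extension remains principal.

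Finally, let $(Y,S)$ be the subshift of combined names, that is, the $Y_0$-coordinate together with the packed $\U_k$-layers, and let $\pi$ be $\pi_0$ composed with the projection onto the first coordinate. Then $\pi\colon(Y,S)\to(X,T)$ is a principal symbolic extension by construction, it is faithful after a final application of Serafin's procedure, and its fibres are $\nu$-trivial for every $\nu\in\msy$ because the $\U_k$-names separate points of $X$; hence $\pi$ is an isomorphic symbolic extension. The main obstacle is the second step: one must coordinate infinitely many marker layers, guarantee universal almost-everywhere decodability of the packed symbols, and keep the combined object a subshift of vanishing total entropy cost. Once this bookkeeping is in place, the measure-theoretic isomorphism on each fibre is immediate from the generating property of the $\U_k$'s.
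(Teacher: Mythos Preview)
The paper does not give its own proof of this theorem; it merely cites Burguet \cite{B15}. So there is nothing to compare your argument against in the text itself. That said, your sketch contains a genuine gap that would prevent the argument from going through.

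The core problem is a confusion about which direction the extension runs. You propose to enlarge $Y_0$ by adjoining the $\U_k$-names of $\pi_0(y)$ (or of a point in a zero-dimensional extension of $X$ lying over $\pi_0(y)$). But this added data is a function of $x=\pi_0(y)$, so it is constant on every fibre $\pi_0^{-1}(x)$. Two distinct points $y_1,y_2$ in the same fibre receive exactly the same $\U_k$-layers, because they map to the same $x$. Thus the ``combined'' system $Y$ is conjugate to $Y_0$ (at least on the natural image; the closure can only add points, not collapse fibres), and the fibres of $\pi$ are identical to those of $\pi_0$. Your final sentence, ``its fibres are $\nu$-trivial \dots\ because the $\U_k$-names separate points of $X$'', is the place where the argument breaks: separating points of $X$ is irrelevant to separating points of a fibre in $Y$.

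What is actually needed is the opposite manoeuvre: rather than injecting $X$-information into $Y$, one must produce a measurable section $\psi:X\to Y$ of $\pi$ and then show that $\psi(X)$ has full $\nu$-measure for every $\nu\in\msy$. This is what makes the extension isomorphic (it is then a symbolic extension with an embedding whose complement is null). Burguet's proof builds such an extension directly, and it is precisely the asymptotic $h$-expansiveness that forces the embedded copy of $X$ to exhaust all the invariant mass in $Y$; a generic principal extension need not have this property. Marker techniques do enter, but they are used to construct the embedding $\psi$, not to append redundant coordinates.
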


Note that since any isomorphic extension is principal, the converse (for aperiodic systems) is already implied by Theorem \ref{ahe}. We remark, that in Burguet's paper, aperiodicity is relaxed to a technical condition on the distribution of periodic points, which we have decided to skip in this survey.

Once again, we can see that as perfect symbolic extensions as isomorphic or even just principal, are available only for the relatively narrow class of systems which are asymptotically $h$-expansive (in smooth dynamics, this roughly corresponds to the class $C^\infty$, see \cite{Buz97}). This is why we introduce a weaker relation between a system and its extension, yet allowing to consider the symbolic extension of this kind a very good symbolic representation. The underlying system is not only a \tl\ factor but in some sense also a Borel-measurable subsystem. Another way of saying this is that the symbolic extension contains a \emph{non-compact} isomorphic extension of \xt. The ``perfectness'' of such an extension however is ``spoiled'' by the remaining part, which is merely as good as a usual symbolic extension. The formal definition reads:

\begin{defn}
Let \xt\ be a \tl\ \ds. By a \emph{symbolic extension with an embedding} we mean a symbolic extension $\pi:(Y,S)\to(X,T)$, which admits an \emph{equivariant Borel selector from preimages}, i.e., a measurable map $\psi:X\to Y$ such that $\psi\circ T = S\circ\psi$ and $\pi\circ\psi = \id_X$. In fact, $\psi$ is a Borel embedding of the \ds\ \xt\ into \ys, hence the name.
\end{defn}

In search for the range of applicability of this kind of symbolic extensions, we first
observe the following generalization of Hedlund's theorem:

\begin{thm}
Every expansive system has a symbolic extension with an embedding.
\end{thm}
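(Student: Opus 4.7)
The plan is to imitate Hedlund's argument with a Borel rather than a clopen partition. Let $\epsilon>0$ be an expansive constant for $(X,T)$. Since $X$ is a compact metric space, I would choose a finite Borel partition $\p=\{P_a:a\in\Lambda\}$ with $\diam(P_a)<\epsilon$ for every $a$ (for example, by taking set differences of a finite subcover by $(\epsilon/3)$-balls). Define $\psi:X\to\Lambda^\s$ to send $x$ to its $\p$-name. Each coordinate is a Borel function of $x$, so $\psi$ is Borel measurable; it intertwines $T$ with $S$ by construction; and it is injective, since $\psi(x)=\psi(x')$ would force $d(T^n x,T^n x')\le\diam(\overline{P_{\psi(x)_n}})<\epsilon$ for all $n$, and expansiveness would then give $x=x'$.

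Let $Y\subseteq\Lambda^\s$ be the closure of $\psi(X)$; this is automatically a subshift. The remaining task is to build a continuous factor map $\pi:(Y,S)\to(X,T)$ with $\pi\circ\psi=\id_X$. I would define it via the following observation: for any $y=(y_n)\in Y$, pick a sequence $x_k\in X$ with $\psi(x_k)\to y$; since $\Lambda$ is discrete, for each fixed $n$ we have $\psi(x_k)_n=y_n$ eventually, which means $T^n x_k\in P_{y_n}$. By compactness of $X$, extract a subsequential limit $x$ of $(x_k)$; then $T^n x\in\overline{P_{y_n}}$ for every $n$. Since $\diam(\overline{P_{y_n}})=\diam(P_{y_n})<\epsilon$, expansiveness guarantees that $x$ is uniquely determined by this condition, and so I set $\pi(y):=x$.

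The remaining verifications are routine. For continuity of $\pi$: if $y^{(k)}\to y$ in $Y$, then by compactness every subsequential limit $x$ of $(\pi(y^{(k)}))$ satisfies $T^n x\in\overline{P_{y_n}}$ for all $n$ (pass to a subsequence along which $y^{(k)}_n=y_n$ eventually, then use closedness of $\overline{P_{y_n}}$ and continuity of $T^n$), hence equals $\pi(y)$ by uniqueness. Equivariance $\pi\circ S=T\circ\pi$ and the identity $\pi\circ\psi=\id_X$ follow immediately from the same uniqueness characterization, and $\psi$ is the desired equivariant Borel selector. The main obstacle, in my view, is extending $\pi$ well-definedly and continuously from the Borel image $\psi(X)$ to its closure $Y$; this is where the passage from each $P_{y_n}$ to the closed set $\overline{P_{y_n}}$, combined with expansiveness, carries the entire weight.
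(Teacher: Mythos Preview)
Your proof is correct and follows essentially the same approach as the paper's: both choose a Borel partition of diameter below the expansive constant, use the $\p$-name map as the embedding $\psi$, and define $\pi(y)$ as the unique point whose orbit lies in the closures $\overline{P_{y_n}}$. The only cosmetic difference is that the paper phrases the construction of $\pi$ via the nested intersection $\bigcap_n \overline{C_n(y)}$ (and derives continuity from the shrinking of these diameters), whereas you argue directly with approximating sequences and subsequential limits; these are equivalent formulations of the same idea.
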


\begin{proof}
Let $\epsilon$ be the expansive constant. There exists a finite partition $\p$ of $X$ into Borel-measurable sets of diameter $<\epsilon$. Now, the map $\psi$ defined by $x\mapsto\p$-name of $x$ is an injective and action-preserving measurable map into the full shift over an alphabet $\Lambda$ bijectively labeling $\p$. Let $Y=\overline{\psi(X)}$. Clearly, $Y$ is a subshift. For $y\in Y$ and $n\in\s$ let $C_n(y)$ denote the set of points whose $\p$-name coincides on the interval $\{k:|k|\le n\}$ with the block of $y$ over the same interval. Since the same block appears in $\psi(x)$ for some $x\in X$, it is clear that $C_n(y)$ is nonempty. By compactness, the intersection $C(y) = \bigcap_n \overline{C_n(y)}$ is nonempty. Note also that
$C_n(y)\subset \bigcap_n T^{-n}(\overline{P_{y_n}})$. Since for any distinct points $x,x'$ there is $n$ such that $T^nx, T^ny$ are $\epsilon$-apart, these two points must not belong to the closure of the same element of $\p$.  This easily implies that, for each $y\in Y$, the set $C(y)$ consists of one point. Let us denote this point by $\pi(y)$. It is now an elementary fact in topology that the diameters of the sets $\overline {C_n(y)}$ must shrink to zero, which implies that the mapping $\pi:Y\to X$ is continuous. Also, it preserves the action, hence it is a \tl\ factor map. Clearly, $\psi$ is a measurable selector from its preimages.
\end{proof}

Clearly, every isomorphic extension is one with an embedding, so that expansiveness is by far too strong
in the above theorem (it is stronger than asymptotic $h$-expansiveness). In fact, we have included the above proof only in order to exercise the construction of a symbolic extension with an embedding in an easy case. The following theorem, although has a similar proof, is a much more precise and gives an equivalent condition. Before we formulate it, we need a definition:

\begin{defn}Let \xt\ be a \tl\ \ds.
A finite measurable partition $\p$ of $X$ satisfying
$$
\lim_n\diam(\p^n) = 0
$$
will be called a \emph{uniform generator}.

Notation: $\p^n$ denotes the common refinement $\bigvee_{|i|< n}T^{-i}(\p)$,

$\diam(\p)$ denotes the maximal diameter of an atom of $\p$.
\end{defn}

Possessing such a generator is in a sense analogous to being expansive. In fact, one might have an impression that it is the same. The irrational rotation is the easiest counterexample (the partition indeed separates orbits, but there is no associated expansive distance). The connection of this notion with our topic is very strong:

\begin{thm}[\cite{BD16}]
A system \xt\ has a uniform generator if and only if it has a symbolic extension with an embedding.
\end{thm}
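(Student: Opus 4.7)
The plan is to prove both directions constructively, essentially by recycling the construction from the preceding theorem (the one on expansive systems) for the forward direction, and by pulling back the natural coordinate partition of the subshift for the reverse direction.

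For the implication ($\Rightarrow$), assume $\p=\{P_a:a\in\Lambda\}$ is a uniform generator. I would mimic the proof of the previous theorem: define $\psi:X\to\Lambda^\s$ by sending $x$ to its $\p$-name and set $Y=\overline{\psi(X)}$, which is a subshift. The map $\psi$ is Borel measurable and intertwines $T$ with the shift $S$. For $y\in Y$ and $n\ge 0$, put $C_n(y)=\bigcap_{|i|\le n}T^{-i}(P_{y_i})$. Since each finite block of $y$ occurs as a subword of some $\p$-name, $C_n(y)\ne\emptyset$; since $C_n(y)$ lies in a single atom of $\p^{n+1}$, we have $\diam(C_n(y))\le\diam(\p^{n+1})\to 0$. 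Passing to closures, the nested compact sets $\overline{C_n(y)}$ intersect in a single point $\pi(y)$. The shrinking diameters yield continuity of $\pi$, and equivariance is immediate, so $\pi:(Y,S)\to(X,T)$ is a topological factor map. Because the $\p$-name of $x$ determines $x$, we have $\pi\circ\psi=\id_X$, so $\psi$ is the required equivariant Borel selector from preimages.

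For the implication ($\Leftarrow$), let $\pi:(Y,S)\to(X,T)$, with $Y\subset\Lambda^\s$, be a symbolic extension with embedding $\psi:X\to Y$. Let $\Q$ be the clopen partition of $Y$ by $0$-th coordinate cylinders, and set $\p=\psi^{-1}(\Q)$, a finite Borel partition of $X$. By equivariance of $\psi$, every atom of $\p^n$ has the form $\psi^{-1}(Q)$ for some atom $Q$ of $\Q^n$. Because $\pi\circ\psi=\id_X$, it follows that $\psi^{-1}(Q)\subseteq\pi(Q)$. The diameters of atoms of $\Q^n$ in $Y$ tend to $0$ in the standard symbolic metric on $\Lambda^\s$, while $\pi$ is uniformly continuous on the compact space $Y$; hence $\diam_X(\pi(Q))\to 0$ uniformly over atoms $Q$ of $\Q^n$ as $n\to\infty$. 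Therefore $\diam(\p^n)\to 0$, and $\p$ is a uniform generator.

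The main conceptual hurdle is the reverse direction: although $\psi$ itself is merely Borel measurable and may be wildly discontinuous, the required diameter control of $\p^n$ in $X$ comes from the uniform continuity of the \emph{factor map} $\pi$, not of $\psi$. Recognising that this suffices is precisely what makes \emph{uniform} generators (rather than topological ones, which would correspond to expansiveness) the correct notion matching Borel embeddings into symbolic extensions.
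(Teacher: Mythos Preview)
Your proof is correct and follows essentially the same approach as the paper: the forward direction recycles the construction of $\pi$ via the shrinking sets $C_n(y)$ (with the diameter control coming directly from the uniform generator hypothesis rather than expansiveness), and the reverse direction pulls back the zero-coordinate partition and uses the inclusion $\psi^{-1}(Q)\subset\pi(Q)$ together with uniform continuity of $\pi$. Your concluding remark about why uniform continuity of $\pi$ (not $\psi$) is the key point is exactly the insight the paper's proof rests on.
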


\begin{proof}
The proof of the first implication is almost the same as in the preceding theorem. The only difference is that since the sets $C_n(y)$ are elements of the partition $\p^n$, the shrinking to zero of their diameters (which are the same as of their closures) follows directly from the assumption.

Now suppose that \xt\ has a symbolic extension $\pi:(Y,S)\to(X,T)$ admitting a required selector $\psi$. Let $\p_\Lambda$ denote the ``zero-coordinate partition'' of $Y$ (i.e., the partition into cylinder sets corresponding to single symbols from $\Lambda$), and define $\p=\psi^{-1}(\p_\Lambda)$. Clearly, $\p$ is a measurable partition of $X$. The convergence of the diameters of $\p^n$ to zero follows directly from the three facts: that the same property has $\p_\Lambda$ in $Y$, that each atom of $\p^n$ is contained in the image by $\pi$ of an atom of $\p_\Lambda^n$, and that $\pi$ is uniformly continuous.
\end{proof}

In the above mentioned paper \cite{BD16} there is given a characterization of systems which admit symbolic extensions with an embedding, as well as tools are provided for computing the lowest possible entropy of such an extension. This is done in terms similar to those used in the entropy theory of general symbolic extensions. In fact, if the system is aperiodic, the embedding requirement has no influence on the entropy of the extension or on the necessary increase of entropy for each individual \im. It does affect, however, faithfulness for quite obvious reasons: a typical measure must have at least two preimages in the symbolic extension: one to which it is isomorphic, and another which has the inevitably increased entropy.

When the system does have periodic points, the criteria become much more complicated and usually a symbolic extension with an embedding will have larger entropy than one without. In this case matters become way too complicated to even be sketched here.

This is as far as we have decided to go with reviewing symbolic extensions in this survey. For more, the reader is referred to the above cited papers.

\subsection{Universality of zero-dimensional dynamics}
Unlike in the case of symbolic extensions, the applicability of zero-dimensional extensions which preserve the dynamics has a very wide range. In fact, every \tl\ \ds\ \xt\ admits a principal and faithful zero-dimensional extension, as stated in the theorem below:

\begin{thm}\cite{DH12}
Any \tl\ \ds\ \xt\ has a faithful principal zero-dimensional extension. Moreover, there exists such an
extension with no periodic points.
\end{thm}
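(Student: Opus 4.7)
The plan is to realize the extension as an inverse-limit (array) coding built from a refining sequence of finite Borel partitions of $X$. Specifically, I aim to produce $\p_1\prec\p_2\prec\cdots$ with $\diam(\p_n)\to 0$ and with the ``universal small boundary'' property that $\mu(\Fr P)=0$ for every $P\in\bigcup_n\p_n$ and every $\mu\in\mtx$ simultaneously. Given such partitions, the address map $\phi(x)=(\phi_n(x))_{n\ge 1}$, where $\phi_n(x)$ is the $\p_n$-name of $x$, sends $X$ measurably into the zero-dimensional product shift $\prod_n\Lambda_n^{\s}$. Set $Y=\overline{\phi(X)}$ and recover a continuous factor map $\pi:Y\to X$ by $\{\pi(y)\}=\bigcap_n\overline{C_n(y)}$, where $C_n(y)$ is the cell of $\p_n$ encoded by the zero-coordinate of $y$'s $n$-th component (the intersection is a singleton since the $C_n(y)$ are nested and their diameters shrink).

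The key technical lemma is that for every $f\in C(X)$ the set
$$
B_f=\{c\in\re:\mu(f^{-1}(c))>0\text{ for some }\mu\in\mtx\}
$$
is at most countable. Indeed, given any family of distinct $c_i\in B_f$ with witness measures $\mu_i$, the average $\mu=\sum_i 2^{-i}\mu_i\in\mtx$ puts positive mass on every one of the pairwise disjoint level sets $f^{-1}(c_i)$, forcing the family to be countable. Fixing a countable dense family $(f_k)\subset C(X)$, I then choose, for each $k$, thresholds in $\re\setminus B_{f_k}$ that split the (bounded) range of $f_k$ into intervals of length $<1/k$. Letting $\Q_k$ be the induced pullback partition and $\p_n=\bigvee_{k\le n}\Q_k$, density of $(f_k)$ yields $\diam(\p_n)\to 0$, while the boundaries of cells of $\p_n$ lie in a finite union of level sets $f_k^{-1}(c)$ with $c\notin B_{f_k}$, hence are null for every invariant measure.

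Faithfulness and principality now fall out. For any $\nu\in\msy$ with $\mu=\pi_*\nu\in\mtx$, the small boundary property gives $\nu(\pi^{-1}(\Fr\p_n))=\mu(\Fr\p_n)=0$, so $\nu$-almost every $y$ has $\pi(y)$ in the interior of $C_n(y)$ for every $n$, which forces $y=\phi(\pi(y))$. Thus $\phi$ is a $\nu$-a.e.\ inverse to $\pi$, making $\pi$ a measure-theoretic isomorphism from $(Y,\nu,S)$ to $(X,\mu,T)$ for every $\nu\in\msy$. This yields faithfulness (any preimage of $\mu$ must agree with $\phi_*\mu$) and principality (an isomorphism preserves entropy).

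For the ``no periodic points'' clause, the main obstacle is that periodic orbits of $X$ automatically lift to periodic orbits of $Y$ under $\phi$. My plan is to pass to the product $Y\times Z$ with a minimal, uniquely ergodic, aperiodic zero-dimensional system $(Z,R)$ of zero entropy (say an odometer) — this at once kills periodic orbits and preserves principality since $R$ adds no entropy. The subtle point is that faithfulness is not automatic after taking a product: different joinings of a given $\nu\in\msy$ with the unique invariant measure of $Z$ would all project to the same $\mu\in\mtx$. The hard step is therefore to arrange that every invariant measure on $Y\times Z$ is the product measure, which one does either by choosing $(Z,R)$ disjoint, in Furstenberg's sense, from the (zero-dimensional, hence controllable) extension $Y$ already constructed, or, alternatively, by a direct fiberwise surgery replacing each periodic orbit of $Y$ by an aperiodic zero-dimensional orbit closure — this disjointness/surgery argument is the step that requires real care.
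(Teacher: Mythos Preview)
Your ``key technical lemma''---that $B_f=\{c\in\re:\mu(f^{-1}(c))>0\text{ for some }\mu\in\mtx\}$ is at most countable---is false, and this is the fatal gap. Take $X=[0,1]$ with $T=\id$ and $f(x)=x$. Every Dirac measure $\delta_c$ is $T$-invariant, and $\delta_c(f^{-1}(c))=1$, so $B_f=[0,1]$ is uncountable. Your averaging argument does not prove countability: forming $\mu=\sum_i 2^{-i}\mu_i$ presupposes a \emph{countable} index set, and showing that any countable subfamily of $B_f$ is countable is a tautology. The point is that different $c$'s may require different witnesses $\mu_c$, and there is no single invariant measure seeing them all.

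What you are really trying to manufacture is the \emph{small boundary property}, and the paper explicitly warns (immediately after Definition~2.17) that this fails in general---the identity on the interval is the stated counterexample. Worse, your construction, when it works, produces not merely a principal faithful extension but an \emph{isomorphic} one (your own argument shows $\phi$ is a $\nu$-a.e.\ inverse to $\pi$ for every $\nu$), and the paper lists the existence of isomorphic zero-dimensional extensions for arbitrary aperiodic systems as an open problem (Question~2.20(1)). So if your lemma were true you would have resolved that question and established the small boundary property universally, both of which are known to be false or open.

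The paper itself does not prove this theorem; it is quoted from \cite{DH12}. The proof there does \emph{not} go through small boundaries: roughly, one takes an arbitrary refining sequence of open covers (not partitions), passes to the associated array of nerve-type subshifts, and controls the fibers of the resulting extension so that conditional entropy vanishes and the lift on invariant measures is injective---without ever needing level sets of continuous functions to be universally null. Your product-with-an-odometer idea for killing periodic points is in the right spirit, and you correctly flag the disjointness issue for faithfulness, but all of that is moot until the first part is repaired.
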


As far as isomorphic zero-dimensional extensions are concerned, there is one obvious constraint on the system \xt: the set of periodic points must be zero-dimensional. But even for aperiodic systems the problem of the existence of such an extension is currently open. The answer is known (and positive) in a large class of systems which satisfy so-called small boundary property, originating from \cite{L89}:

\begin{defn}
A system \xt\ has \emph{small boundary property} if there exists a base of the topology consisting of sets whose boundaries are null sets, i.e., have measure zero for every \im. Equivalently, there exists a refining \sq\ of partitions\footnote{A \sq\ of partition $\{\Pa_k\}_{k\ge 1}$ is \emph{refining} if $\Pa_{k+1}\preccurlyeq\Pa_k$ for each $k$, and $\diam(\Pa_k)\to 0$. The easy proof of the equivalence in the definition is left to the reader.} into sets with null boundaries.
\end{defn}

It follows from the works of E. Lindenstrauss and B. Weiss \cite{L89,LW00}, that every system with finite topological entropy and possessing an infinite minimal factor, has small boundary property. It is not known to what extent can the latter assumption be weakened. Clearly, some assumption is necessary, as for instance the interval with the action of the identity map does not have small boundary property. For instance, Kulesza \cite{Ku95} proved this property for any finite-dimensional system with zero-dimensional set of periodic points (regardless of entropy). In any case, the class of systems with small boundary property is quite large, which makes the following easy observation very useful:

\begin{thm}
If \xt\ has the small boundary property then it admits an isomorphic zero-dimensional extension.
\end{thm}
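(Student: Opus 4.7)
The plan is to convert the refining sequence of small-boundary partitions into a simultaneous symbolic coding and to take the closure of the image as the extension. Let $\{\Pa_k\}_{k\ge 1}$ be a refining sequence of Borel partitions of $X$ with $\diam(\Pa_k)\to 0$ and $\mu(\partial P)=0$ for every atom $P\in\Pa_k$ and every $\mu\in\mtx$, guaranteed by SBP. Label the atoms of $\Pa_k$ by a finite alphabet $\Lambda_k$, let $\phi_k:X\to\Lambda_k^\s$ be the $\Pa_k$-name map, and bundle them into $\phi=(\phi_k)_k:X\to\prod_k\Lambda_k^\s$, which intertwines $T$ with the diagonal shift $S$ on the product. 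Set $Y=\overline{\phi(X)}\subset\prod_{k\ge 1}\Lambda_k^\s$. Being a closed shift-invariant subset of a countable product of Cantor spaces, $Y$ is a compact, metrizable, zero-dimensional \ds\ \ys.

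The factor map $\pi:Y\to X$ should be defined as the decoding of a name: for $y=(y_k)_k\in Y$ consider the nested family
\[
K(y)=\bigcap_{k\ge 1}\bigcap_{n\in\s} T^{-n}\bigl(\overline{P_{y_{k,n}}}\bigr).
\]
A compactness argument, approximating $y$ by points of $\phi(X)$, shows every finite sub-intersection is nonempty, so $K(y)\ne\emptyset$; the hypothesis $\diam(\Pa_k)\to 0$ then forces $K(y)$ to be a singleton $\{\pi(y)\}$. Continuity of $\pi$ is the standard ``cylinder forces atom of small diameter'' argument, and $\pi\circ S=T\circ\pi$ is immediate from the construction. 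Moreover $\phi(x)\in Y$ and $\pi(\phi(x))=x$ for every $x$, so $\psi:=\phi$ is a Borel, action-preserving section of $\pi$.

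The last step is to upgrade $\pi$ to an isomorphic extension, and this is where SBP plays its essential role. Given any $\nu\in\msy$, let $\mu=\pi_*\nu\in\mtx$ and consider the bad set
\[
E=\bigcup_{k\ge 1}\ \bigcup_{a\in\Lambda_k}\ \bigcup_{n\in\s} T^{-n}(\partial P_{k,a}),
\]
a countable union of $T$-preimages of sets of $\mu$-measure zero, so $\mu(E)=0$. For any $y\in Y$ with $\pi(y)\notin E$, every iterate $T^n\pi(y)$ lies in the \emph{interior} of the unique atom $P_{k,y_{k,n}}$ determined by the $k$-th coordinate of $y$ at position $n$, which forces $\psi(\pi(y))=y$. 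Hence $\psi\circ\pi=\id_Y$ holds on a set of full $\nu$-measure, which simultaneously gives $\nu=\psi_*\mu$ (faithfulness) and exhibits $\psi$ as a measurable inverse to $\pi$ on full measure sets, so $\pi$ is an isomorphism between \yns\ and \xmt.

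I expect the main subtlety to lie precisely in this final step: one has to match the Borel section $\psi$ with the topological decoding $\pi$ on a set of full measure simultaneously for every $\nu\in\msy$, and SBP is exactly the hypothesis that makes the orbit of the boundaries negligible under every invariant measure at once. All the remaining ingredients (zero-dimensionality, continuity of $\pi$, equivariance of $\phi$, and existence of the nested decoding intersection) are routine once the refining small-boundary partitions are in hand.
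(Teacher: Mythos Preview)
Your proposal is correct and follows essentially the same approach as the paper's proof: encode $x$ via the $\Pa_k$-names into an array system, take the closure, define the factor map $\pi$ by decoding, and observe that multiple preimages can occur only over points whose orbit hits some $\partial P$, a null set by SBP. The paper compresses the construction of $\pi$ and the verification of the isomorphic property into a couple of sentences (calling the former a ``standard argument''), whereas you spell out the nested-intersection definition of $K(y)$, the compactness/singleton argument, and the boundary computation showing $\psi\circ\pi=\id_Y$ off $\pi^{-1}(E)$; but the underlying idea is identical.
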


\begin{proof}[Proof adapted from \cite{BD05}]
Let $\{\Pa_k\}_{k\ge 1}$ be a refining sequence of finite partitions with null boundaries.
For each $k\ge 1$ let $(x_{k,n})_{n\in\s}$ be the $\Pa_k$-name of $x$. In this manner, we associate to each $x$ an array $\phi(x)=(x_{k,n})_{k\ge 1, n\in\s}$. Let $X'$ be the closure of $\phi(X)$. By a standard argument, there exists a continuous factor map $\pi:X'\to X$, which on $\phi(X)$ is inverse to $\phi$. In other words, $\pi$ is an extension with an embedding. However, a point $x\in X$ has multiple preimages in $X'$ only if its orbit visits a boundary of an element of some partition $\Pa_k$. Clearly, the set of such points is null, so the extension is in fact isomorphic.
\end{proof}

As we said, it is unknown whether all aperiodic systems have isomorphic zero-dimensional extensions, and whether aperiodicity can be relaxed. On the other hand, it is not known whether the existence of such an extension is equivalent to the small boundary property.

But even a weaker relation between a system \xt\ and a zero-dimensional ``partner'', say \xtp, would be satisfactory, and we mean here the existence of a common isomorphic extension (not necessarily zero-dimensional). The two systems would then be joined by a measurable map $\pi$ (defined except on a null set) which applied to \im s would serve as an affine homeomorphism, while for each \im\ and its image, it would serve as a measure-theoretic isomorphism. It is unknown whether every aperiodic system has a zero-dimensional partner of this kind but we are inclined to believe that it is so. Let us summarize the above mentioned problems:

\begin{ques}
\begin{enumerate}
	\item Does every aperiodic system have an isomorphic zero-dimensional extension? 
	\item If yes, how can the condition of aperiodicity be relaxed? 
	\item If not, is the existence of an isomorphic extension equivalent to the small boundary property? 
	\item Is it at least true that for any aperiodic system \xt\ there exists a zero-dimensional system \xtp\ such that there is a common isomorphic extension of both \xt\ and \xtp?
	\item If yes, how can the condition of aperiodicity be relaxed? 
\end{enumerate}
\end{ques}

\subsection{Array systems, countable joinings and inverse limits of subshifts}

Now we provide a convenient representation of all zero-dimensional systems and we explain how they can be built from subshifts. The notions introduced in this subsection are crucial for managing zero-dimensional dynamics in its full generality.

\begin{defn}
Let $\Lambda_1,\Lambda_2,\dots$ be finite alphabets (the cardinalities need not be bounded).
By an \emph{array system} we mean any closed, shift-invariant subset of the Cartesian product
$\prod_k \Lambda_k^\s$. Each element of the array system can be pictured as an array
$x=[x_{k,n}]_{k\in\na,n\in\s}$, such that each $x_{k,n}$ belongs to $\Lambda_k$.
We denote by $\phi_k$ and $\pi_k$ the projections of $X$ to $\Lambda_k^\s$ and $\prod_{i\le k}\Lambda_k^\s$, respectively. The images by $\phi_k$ and $\pi_k$ will be denoted as $X_k$
and $X_{[1,k]}$, and called the \emph{$k$th row factor} and the \emph{top $k$ rows factor} of $X$,
respectively. The $k$th row factor is a subshift over $\Lambda_k$, while the top $k$ rows factor
is a subshift over the finite alphabet $\Delta_k=\Lambda_1\times\cdots\times\Lambda_k$. Speaking about an array we will refer to the indices $k$ and $n$ as \emph{vertical} and \emph{horizontal} coordinates (positions), respectively.
\end{defn}

\begin{defn}
Let $(X_k,T_k)$ be a (finite or countable) \sq\ of \tl\ \ds s. By a \tl\ joining of these systems
we mean any closed subset of the product $\prod_k X_k$ which has full projections on every coordinate
and is invariant under the product transformation $T=T_1\times T_2\times\cdots$.
\end{defn}

A special case of a countable joining is an inverse limit.
\begin{defn}
Let $(X_k,T_k)$ be countable \sq\ of \tl\ \ds s such that $(X_k,T_k)$ is a topological factor of $(X_{k+1},T_{k+1})$, for each $k\ge 1$. The corresponding (surjective) factor maps $\psi_k:X_{k+1}\to X_k$ are called the \emph{bonding maps}. The \emph{inverse limit} of these systems is their joining defined by the rule
$$
(x_k)_{k\ge1}\in\prod_kX_k \text{ \ belongs to the inverse limit $X$ if and only if \ } \forall k \; x_k = \psi_k(x_{k+1}).
$$
The inverse limit is denoted by $(X,T)=\overset{\longleftarrow}\lim_k(X_k,T_k)$.
\end{defn}

\begin{thm}
The following statements about a \tl\ \ds\ \xt\ are equivalent:
\begin{enumerate}
\item $X$ is zero-dimensional,
\item \xt\ is (conjugate to) an array system,
\item \xt\ is (conjugate to) a countable joining of subshifts,
\item \xt\ is (conjugate to) an inverse limit of subshifts.
\end{enumerate}
\end{thm}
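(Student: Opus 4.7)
The plan is to establish the cycle $(4)\Rightarrow(3)\Rightarrow(2)\Rightarrow(1)\Rightarrow(4)$. The first three arrows are essentially formal: an inverse limit is, by definition, a particular closed shift-invariant subset of $\prod_k X_k$, so it is a countable joining; a countable joining of subshifts $X_k\subset\Lambda_k^\s$ is a closed shift-invariant subset of $\prod_k\Lambda_k^\s$, which is by definition an array system; and an array system sits inside $\prod_k\Lambda_k^\s$, a countable product of (discrete, hence zero-dimensional) spaces, which is itself zero-dimensional in the product topology, whence so is any closed subspace.

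The substantive direction is $(1)\Rightarrow(4)$. Since $X$ is a compact zero-dimensional metric space, I would first fix a refining sequence $\{\Pa_k\}_{k\ge1}$ of finite clopen partitions with $\diam(\Pa_k)\to 0$, and label each $\Pa_k$ bijectively by a finite alphabet $\Lambda_k$. The $\Pa_k$-name map $\phi_k:X\to\Lambda_k^\s$ is continuous (each atom is clopen) and intertwines $T$ with the shift, so $X_k:=\phi_k(X)$ is a subshift. The refinement relation $\Pa_{k+1}\preccurlyeq\Pa_k$ induces a symbol map $\Lambda_{k+1}\to\Lambda_k$ (sending the label of a $\Pa_{k+1}$-atom to the label of its $\Pa_k$-parent), whose coordinate-wise application is a continuous shift-commuting surjection $\psi_k:X_{k+1}\to X_k$. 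These $\psi_k$ serve as bonding maps, giving an inverse system, and I set $X'=\overset{\longleftarrow}\lim_k(X_k,S)$.

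Next, I would define $\phi:X\to X'$ by $\phi(x)=(\phi_k(x))_{k\ge1}$. The compatibility $\psi_k\circ\phi_{k+1}=\phi_k$ is immediate from the construction of $\psi_k$, so $\phi$ lands in $X'$; it is continuous and shift-equivariant. Injectivity is easy: $\phi(x)=\phi(y)$ means $x$ and $y$ share every $\Pa_k$-atom, and $\diam(\Pa_k)\to0$ forces $x=y$. The main point---and the only place that uses something beyond pure formalism---is surjectivity. Given $(x_k)\in X'$, the zero-coordinates $x_{k,0}$ name a nested sequence of atoms $A_k\in\Pa_k$ (nestedness follows from $\psi_k(x_{k+1})=x_k$); each $A_k$ is nonempty because the symbol $x_{k,0}$ actually occurs in $X_k\subset\phi_k(X)$; by compactness $\bigcap_k A_k\ne\emptyset$, and diameter shrinkage reduces this intersection to a single point $x$. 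Repeating this argument at each horizontal coordinate $n$ (applied to $T^nx$, using shift-equivariance of the $\phi_k$) shows $\phi_k(x)=x_k$ for all $k$. Thus $\phi$ is a continuous bijection between compact Hausdorff spaces, hence a homeomorphism, and a topological conjugacy onto $X'$.

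The only real obstacle is the surjectivity step: one must pass from the purely formal compatibility of the sequence $(x_k)$ to a genuine point of $X$ realizing it. This is handled by the nested-closed-sets compactness argument, which is essentially the same device used in the small-boundary-property theorem above and which is the engine behind every inverse-limit representation of a zero-dimensional system.
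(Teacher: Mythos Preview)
The paper does not prove this theorem: it explicitly says ``The elementary proof is left to the reader.'' Your cycle $(4)\Rightarrow(3)\Rightarrow(2)\Rightarrow(1)\Rightarrow(4)$ is the natural one, and the first three implications are handled correctly.

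There is, however, a small gap in your surjectivity argument for $(1)\Rightarrow(4)$. You construct $x$ as the unique point in $\bigcap_k A_k$, where $A_k\in\Pa_k$ is the atom labeled $x_{k,0}$. You then write that ``repeating this argument at each horizontal coordinate $n$ (applied to $T^nx$, using shift-equivariance of the $\phi_k$) shows $\phi_k(x)=x_k$.'' But repeating the construction at coordinate $n$ produces \emph{some} point $y_n=\bigcap_k B_k^{(n)}$ (with $B_k^{(n)}$ the $\Pa_k$-atom labeled $x_{k,n}$); what is missing is the reason why $y_n=T^n x$. Shift-equivariance of $\phi_k$ alone does not give this, since $x$ was built only from zero-coordinate data and you have not yet linked its orbit to the other coordinates of the $x_k$.

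The clean fix is to bypass the coordinate-by-coordinate construction entirely: set $C_k=\phi_k^{-1}(\{x_k\})$. Each $C_k$ is nonempty (since $x_k\in X_k=\phi_k(X)$), closed (continuity of $\phi_k$), and the compatibility $\psi_k\circ\phi_{k+1}=\phi_k$ together with $\psi_k(x_{k+1})=x_k$ gives $C_{k+1}\subset C_k$. By compactness $\bigcap_k C_k\neq\emptyset$, and any $x$ in this intersection satisfies $\phi_k(x)=x_k$ for every $k$ at once. (Your zero-coordinate argument already shows uniqueness of such $x$.) Alternatively, one can salvage your approach by choosing witnesses $z_k\in X$ with $\phi_k(z_k)=x_k$, noting $z_k\to x$ since $z_k\in A_k$, and then using continuity of $T^n$ to pass to the limit in $T^n z_k\in B_k^{(n)}$.
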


The elementary proof is left to the reader. From this place on we will present almost all proofs, some of them will be quoted from original papers (mostly in modified versions), some will be completely new.

\subsection{Markers in aperiodic systems}
In this section we provide a very useful tool in manipulating zero-dimensional systems.

\begin{defn}
Let \xt\ be a \tl\ \ds\ and let $n\in\na$. By an \emph{$n$-marker} we mean a clopen set $F\subset X$
such that
\begin{enumerate}
	\item no orbit visits $F$ twice in $n$ steps  (i.e., $F,T^{-1}F,\dots,T^{-(n-1)}F$ are disjoint;
	we will say that $F$ is \emph{$n$-separated}),
	\item every orbit visits $F$ at least once (by compactness, this implies that for some $N\in\na$,
	we have $F\cup T^{-1}F\cup \dots \cup T^{-(N-1)}F=X$).
\end{enumerate}
\end{defn}

We note that a notion of a marker which is an open set, and not necessarily clopen,
 is investigated in \cite{Gu15,Gu17}. Such a concept may be useful for the study of non zero-dimensional minimal dynamical systems.

We have the following key fact:
\begin{thm}[Krieger's Marker Lemma, aperiodic case, see \cite{Bo83}]\label{krieger}
If \xt\ is an aperiodic (with no periodic points) and zero-dimensional system then for every $n$ there exists an $n$-marker. The parameter $N$ can be selected equal to $2n-1$.
\end{thm}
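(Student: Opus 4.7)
The plan is a three-stage construction: build a small clopen $n$-separated neighborhood of each point, glue these into a finite clopen partition by compactness, and then greedily assemble a marker out of the pieces.

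First I would fix $x \in X$. By aperiodicity, the points $x, Tx, \ldots, T^{n-1}x$ are pairwise distinct, so zero-dimensionality supplies pairwise disjoint clopen sets $W_0, \ldots, W_{n-1}$ containing them respectively. Put $U_x = W_0 \cap T^{-1}W_1 \cap \cdots \cap T^{-(n-1)}W_{n-1}$; this is a clopen neighborhood of $x$, and if $y \in U_x$ then $T^k y \in W_k$, so $T^k y \in U_x \subseteq W_0$ can hold only for $k = 0$, showing that $U_x$ is $n$-separated. Compactness then extracts a finite subcover $U_{x_1}, \ldots, U_{x_m}$, which I refine to a clopen partition $V_1, \ldots, V_m$ with $V_i \subseteq U_{x_i}$; each $V_i$ inherits $n$-separation.

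Next I would set $F_0 = \emptyset$ and
$$
F_i = F_{i-1} \cup \Bigl( V_i \setminus \bigcup_{k=1}^{n-1}\bigl(T^{-k}F_{i-1} \cup T^{k}F_{i-1}\bigr) \Bigr),
$$
removing from $V_i$ precisely those points whose inclusion would place two marker-points fewer than $n$ steps apart on some orbit. When $T$ is a homeomorphism every set involved is clopen, so each $F_i$ is clopen, and by construction $n$-separated; set $F = F_m$. To verify $N = 2n-1$, pick $x \in X$ and let $y = T^{n-1}x \in V_i$. If $y \in F$ then $x \in T^{-(n-1)}F$. Otherwise $y$ was removed at stage $i$, so for some $z \in F_{i-1} \subseteq F$ and $k \in \{1, \ldots, n-1\}$, either $T^k y = z$, giving $z = T^{n-1+k}x \in F$ with $n-1+k \in \{n, \ldots, 2n-2\}$, or $T^k z = y$, giving $z = T^{n-1-k}x \in F$ with $n-1-k \in \{0, \ldots, n-2\}$. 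Thus every $x$ lies in some $T^{-j}F$ with $j \in \{0,1,\ldots,2n-2\}$.

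The delicate point is the non-invertible case $\s = \na_0$: then $T^k F_{i-1}$ is only closed and need not be clopen, and the deduction $z = T^{n-1-k}x$ from $T^k z = T^{n-1}x$ fails without injectivity of $T^{n-1-k}$. My preferred fix is to carry out the whole construction in the natural invertible extension of $(X,T)$, which is itself zero-dimensional and aperiodic, and then push the resulting marker back down, taking care to build it from atoms of a partition lifted from a clopen partition of $X$ so that the image is clopen.
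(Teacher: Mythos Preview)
For the $\z$-action your argument is correct and essentially identical to the paper's: same greedy construction, same $N=2n-1$ verification; the paper writes the removed set as $\bigcup_{-n<i<n}T^{-i}(F_{j-1})$ and does not bother to refine the cover to a partition, but these differences are cosmetic.

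Your handling of $\s=\na_0$ via the natural extension, however, does not work. Even if you start from lifted atoms $\tilde V_i=\pi^{-1}(V_i)$ in the natural extension $\pi:(\tilde X,\tilde T)\to(X,T)$, the greedy step subtracts sets of the form $\tilde T^{k}(\tilde F_{i-1})$, and these are \emph{not} $\pi$-saturated: whether $\tilde x\in\tilde T^{k}(\tilde F_{i-1})$ depends on the $k$-th \emph{past} coordinate of $\tilde x$, which is not determined by $\pi(\tilde x)$. Consequently $\tilde F$ is not a union of full fibers, so $\pi(\tilde F)$ need not be open, and it need not even be $n$-separated in $X$ (from $x=\pi(\tilde x)$ and $T^kx=\pi(\tilde y)$ with $\tilde x,\tilde y\in\tilde F$ one cannot conclude $\tilde T^k\tilde x=\tilde y\in\tilde F$). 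When $T$ is not surjective, $\pi$ is not even onto $X$, so nothing at all comes back. The paper's device for the noninvertible case is different and stays in $X$: before the greedy algorithm it replaces each covering set $U_j$ by $U_j'=T^{-nm}(U_j)$, the point being that the finitely many forward images needed in the induction then become preimage sets and hence remain clopen.
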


\begin{proof}
Fix an $n\in\na$. Because there are no periodic points, every point $x\in X$ has a clopen neighborhood $U_x$ which is $n$-separated. Choose a finite subcover $\U = \{U_j:j=1,\dots,m\}$ of the cover by the sets $U_x$. The sets $U_j'= T^{-nm}(U_j)$ (where $m$ is the cardinality of $\U$) are also $n$-separated, they cover $X$, and, in addition their $nm$ forward images are also clopen. Now, we define inductively
\begin{align*}
&F_1 := U'_1\\
&F_{j+1}:= F_j\cup\Bigl(U'_{j+1} \setminus \bigcup_{-n< i< n}T^{-i}(F_j)\Bigr)
\end{align*}
and we set $F = F_m$. This is clearly a clopen set. Also note that the \sq\ $F_j$ increases with $j$.
\smallskip

For (1) we will inductively prove that each $F_j$ is $n$-separated. For $F_1$ this is clear because $F_1=U'_1$ is $n$-separated. Suppose we have proved the property for $F_{j-1}$. If $n$-separation fails
for $F_j$ then $T^{-i'}F_j$ and $T^{-i''}F_j$ are not disjoint for some $0\le i'<i''< n$. Applying
$T^{i'}$ we get that $F_j$ and $T^{-i}F_j$ contain a common point $x$, where $i=i''-i'$ satisfies
$0\le i< n$. In other words, $F_j$ contains both $x$ and $T^ix$. Since $F_j$ is contained in the union of two $n$-separated sets $F_{j-1}$ and $U'_j$, none of these sets contains both $x$ and $T^ix$. So, one of these points belongs to $F_{j-1}$ and the other to $U'_j$ (there are two possible cases). But, in either case, the point which belongs to $U'_j$ also belongs to the set $\bigcup_{-n< i< n}T^{-i}(F_{j-1})$, subtracted from $U'_j$ when defining $F_j$. So, that point does not belong to $F_j$, a contradiction.
\smallskip

For~(2) consider a point $x\in X$. Then $T^{n-1}x$ belongs to some $U'_j$ $(1\le j \le m)$. If $T^{n-1}x\in F_j$ then $x\in T^{-(n-1)}F_j\subset T^{-(n-1)}F$ and (2) holds. The only way $T^{n-1}x$ may not belong to $F_j$ is that $j>1$ and $T^{n-1}x$ belongs to $\bigcup_{-n< i< n}T^{-i}(F_{j-1})$. In such case, however, $x\in T^{-(n-1+i)}(F_{j-1})\subset T^{-(n-1+i)}F$ where $n-1+i\in\{0,\dots,2n-2\}$, as required.
\end{proof}

In zero-dimensional systems represented as array systems, the times of visits in the marker sets can be conveniently pictured as additional symbols (which we will call \emph{markers}) inserted in the rows of each array. We choose to use vertical bars separating the symbols in a selected row. If $T^nx$ belongs the marker set, we will put such a marker between the symbols at positions $n$ and $n+1$.
Because the marker sets are clopen, adding the markers produces a topologically conjugate representation of the system. In this form the markers can be easily manipulated (shifted, added, removed, copied from one row to another). Every such manipulation translates to (complicated) set operations on the marker sets, but the array representation enables one to forget these complications. In order to keep our system conjugate, we only need to make sure that our manipulations are
\begin{itemize}
	\item shift equivariant, and
	\item depend locally on a bounded area in the array only (this is continuity).
\end{itemize}

\begin{thm}\label{markers}
Every aperiodic, zero-dimensional system \xt\ admits a \emph{standard markered array representation} such that in every array $x\in X$ the following restrictions hold
\begin{enumerate}
	\item the markers in row $k+1$ are allowed only at horizontal positions of the markers in row $k$ (i.e., 
	the corresponding marker sets are nested),
	\item the markers in row $k$ appear with gaps ranging between two positive integers $n_k^{\min}\le
	n_k^{\max}$, where $\lim_k n_k^{\min}=\infty$.
\end{enumerate}
Additionally, we can arrange the system of markers to be \emph{balanced}, i.e., so that
\begin{enumerate}
	\item[(3)] the ratios $\frac{n_k^{\min}}{n_k^{\max}}$ tend to $1$ with $k$.
\end{enumerate}
\end{thm}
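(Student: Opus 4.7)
The plan is to realize \xt\ as an array system via the equivalence theorem of the previous subsection, and then construct the nested marker sets $F_k\subset X$ inductively, encoding each $F_k$ by vertical bars in row $k$ of the array. Since each $F_k$ will be clopen, inserting these bars is a topological conjugacy, so the question reduces to finding the right sequence of clopen marker sets.

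For the core properties (1) and (2) I would iterate Theorem~\ref{krieger} on induced systems. Start with $F_1$ obtained by applying Theorem~\ref{krieger} to \xt\ directly, yielding a clopen $n_1$-separated set whose return gaps lie in $[n_1,2n_1-1]$. Given $F_k$ clopen and $n_k^{\min}$-separated with gaps in $[n_k^{\min},n_k^{\max}]$, the first-return map $T_{F_k}\colon F_k\to F_k$ turns $F_k$ into a zero-dimensional aperiodic \ds\ in its own right (a clopen subset of the zero-dim space $X$, and aperiodicity is inherited from $T$). Applying Theorem~\ref{krieger} to $(F_k,T_{F_k})$ with a parameter $m_{k+1}$ produces a clopen $F_{k+1}\subset F_k$ whose induced return times lie in $[m_{k+1},2m_{k+1}-1]$; pulling this back to $T$-time gives $F_{k+1}$-gaps in $[m_{k+1}n_k^{\min},(2m_{k+1}-1)n_k^{\max}]$. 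Nesting (1) is immediate from $F_{k+1}\subset F_k$, and choosing $m_{k+1}\to\infty$ fast enough secures (2).

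The balanced property (3) does not come for free from this naive iteration, since the resulting ratio $\frac{(2m_{k+1}-1)\,n_k^{\max}}{m_{k+1}\,n_k^{\min}}\to 2\,\frac{n_k^{\max}}{n_k^{\min}}$ fails to improve (indeed, it typically worsens) with $k$. Instead, at the induction step I would replace Krieger's lemma by a target-gap construction: given $F_k$ with maximal return gap $n_k^{\max}$, pick a target $T_{k+1}\gg n_k^{\max}$ and select $F_{k+1}\subset F_k$ by the greedy rule that, starting from each $F_{k+1}$-visit, skips $F_k$-visits until the accumulated $F_k$-return time first exceeds $T_{k+1}$, at which point the next $F_k$-visit is declared to belong to $F_{k+1}$. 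Such a rule yields $F_{k+1}$-gaps in $[T_{k+1},T_{k+1}+n_k^{\max}]$, so the ratio $n_{k+1}^{\min}/n_{k+1}^{\max}$ is at least $T_{k+1}/(T_{k+1}+n_k^{\max})$, which tends to $1$ as $T_{k+1}/n_k^{\max}\to\infty$.

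The main obstacle is that this greedy rule is self-referential and hence not visibly clopen: whether $x$ belongs to $F_{k+1}$ depends on the location of the previous $F_{k+1}$-visit along the orbit of $x$. I would resolve this by anchoring the recursion inside towers of an auxiliary coarser marker, constructed first on a much larger scale using Theorem~\ref{krieger}, so that its clopen structure provides an unambiguous basepoint from which the greedy selection proceeds inside each of its orbit-segments. One then has to verify that the resulting $F_{k+1}$ is clopen and consistent from tower to tower, that the nesting $F_1\supset F_2\supset\cdots$ can be arranged compatibly with the auxiliary scaffolding, and that the rates $T_{k+1}/n_k^{\max}\to\infty$ can be achieved simultaneously with $n_k^{\min}\to\infty$. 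This careful bookkeeping of parameters across the two intertwined inductions is where the actual work lies.
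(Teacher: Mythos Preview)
Your route to (1) and (2) via first-return maps is a legitimate alternative to the paper's argument, and in some ways cleaner: nesting $F_{k+1}\subset F_k$ comes for free. The paper instead applies Krieger's lemma \emph{independently} at each level $k$ (obtaining unrelated marker sets $F_k$ with gaps in $[n_k,2n_k-1]$) and then performs an \emph{upward adjustment}: each marker in row $k+1$ is slid leftward to the nearest marker in row $k$. This is a shift-equivariant, locally determined manipulation that secures nesting after the fact, perturbing the gaps only by $O(n_k^{\max})$, which is negligible relative to $n_{k+1}$ if the $n_k$ grow fast.

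For (3) your instinct to anchor the subdivision with a coarse auxiliary marker is exactly right, and the paper does this too (its \emph{primary markers}). But the anchored greedy rule as you describe it does not deliver ratio $\to 1$. If you restart greedy at each auxiliary-tower base, the last greedy-selected $F_k$-visit $p_m$ in a tower can land within a single $F_k$-gap of the next tower base $H$, so the boundary gap $H-p_m$ may be as small as $n_k^{\min}$, far below $T_{k+1}$. Since (3) concerns the \emph{worst-case} ratio $n_{k+1}^{\min}/n_{k+1}^{\max}$, one short boundary gap per tower already pins the ratio near $0$, not near $1$; this is not a bookkeeping issue but a structural defect of greedy selection.

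The paper avoids this by replacing greedy selection with an \emph{exact} subdivision: each primary interval of length $m\ge n_k(n_k+1)$ is written as $p\,n_k+q\,(n_k+1)$ (Frobenius), and partitioned into $p$ pieces of length $n_k$ followed by $q$ of length $n_k+1$, leaving no residual. This yields gaps in $\{n_k,n_k+1\}$ at every level (independently, with no nesting yet), and only \emph{then} is the upward adjustment applied; the small perturbation it introduces is what keeps (3) intact. So the paper's order is ``balance first, nest second'', opposite to yours, and this order is what makes the clean number-theoretic subdivision available.
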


The markers in row $k$ will be called $k$-markers. Formally there is a notational collision with ``$n$-marker sets'' (for instance the $k$-markers correspond to $n_k$-marker sets), but we will never exchange the letters $n$ and $k$ in their roles. The finite subarrays stretching vertically through rows $1$ through $k$ and horizontally between two consecutive $k$-markers of some array $x\in X$ will be called \emph{$k$-rectangles} appearing in $x$ (see figure below; a $3$-rectangle in some array $x$ is shown in grey).
\begin{center}
\includegraphics[width=12cm]{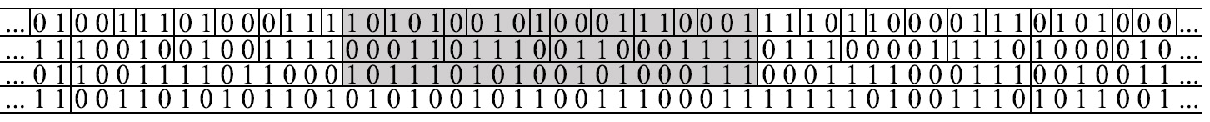}
\end{center}
\begin{proof}[Proof of Theorem \ref{markers}] We begin with some array representation of $X$ and we select a fast growing \sq\ $n_k$. For each $k$ we find an $n_k$-marker set and denote it by $F_k$. In every orbit we mark the visits to $F_k$ by placing markers in form of vertical bars in row $k$: if $T^nx\in F_k$ we place the bar next to (on the right of) the symbol $x_{k,n}$. In this manner, every array has markers in each row $k$ at distances bounded below by $n_k$ and above by $N_k=2n_k-1$ (so that (2) is satisfied). This representation is a (conjugate to \xt) array system over the enriched alphabets $\Lambda'_k = \Lambda_k\times\{\emptyset,|\} = \{a,a|: a\in\Lambda_k\}$. Next, we apply one of the above mentioned ``marker manipulations'' called \emph{upward adjustment}. We do not move the markers in row 1. Proceeding inductively on $k$, we move each marker in row $k+1$ horizontally so it matches (appears at the same horizontal coordinate as) the nearest to the left marker in row $k$ (if $\s=\na_0$ and there is no marker on the left in row $k$, we delete the marker in row $k+1$). It is easy to see that the procedure is shift-equivariant (if we moved the markers to the right, shift-equivariance would fail in case $\s=\na_0$) and for each marker, its new position depends on a bounded area in the array. So the resulting new representation is conjugate. The markers now satisfy (1), (which means that the corresponding new marker sets $F_k$ now form a nested \sq). If $n_k$ grows fast enough (we need $n_{k+1}$ to be much larger than $N_k$), then the gaps between the new markers in row $k$ range between new constants $n_k^{\min}$ and $n_k^{\max}$ which differ from the original bounds $n_k$ and $N_k$ only by a small percentage. So the condition still (2) holds.

\smallskip
In order to fulfill (3) we need another manipulation on the markers, which we will call \emph{subdividing}, and which should be applied prior to the upward adjustment. We will first do it only for $\z$-actions.

For each $n_k$ there exists a number $m_k$ (more precisely, this is $n_k(n_k+1)$) such that every number
$m\ge m_k$ can be represented as a sum $pn_k+q(n_k+1)$, where $p,q$ are nonnegative integers. Moreover, if we maximize $p$ (and minimize $q$), the pair $p(m),q(m)$ will be uniquely determined by $m$. We begin by placing in each row $k$ the markers corresponding to the visits in some $m_k$-marker sets (we call them \emph{primary markers}). This produces a conjugate representation of \xt. The primary markers divide the $k$th row of every array into intervals of bounded lengths larger than or equal to $m_k$. Next, we subdivide each of these intervals (by adding new \emph{secondary markers} between the primary markers) as follows: if $m$ is the length of the interval, we subdivide it into $p(m)$ intervals of length $n_k$ on the left, followed by $q(m)$ intervals of length $n_k+1$ on the right. The position of each secondary marker depends on a bounded area around it (and the primary markers) and is shift-equivariant for $\s=\z$, so the new representation remains conjugate. After this modification we disregard the classification into primary and secondary markers, and we consider all of them simply as \emph{markers} (or $k$-markers if the row number is specified). Now, the gaps between the $k$-markers range between $n_k^{\min}=n_k$ and $n_k^{\max}=n_k+1$, so (2) and (3) obviously hold.

At this point we apply the upward adjustment, which (assuming that $n_k$ grows fast enough) changes each of the numbers $n_k^{\min}$ and $n_k^{\max}$ by a small percentage, so that (2) and (3) still hold, while we also satisfy (1).

The above procedure fails for $\s=\na_0$ because the block in row $k$ between the coordinate $0$
and the first primary marker may be ``incomplete'', i.e., have no marker on the left end and be shorter than $m_k$. In such case we do not know how to subdivide this block (any decision in this aspect may lead to violation of shift-equivariance). So, we apply the following trick: for each $k$ we subdivide all but the initial (incomplete) block between the primary $k$-markers and then we shift all markers in that row by $M_k$ to the left, where $M_k$ is the maximal distance between the primary markers in row $k$. In this manner we get rid of the undivided interval via a continuous and shift-equivariant procedure. Now we apply the upward adjustment.
\end{proof}

We will also need the following observation: Some arrays have a ``marker of infinite depth'', i.e., extending throughout all rows. These however can be made exceptional:

\begin{lem}\label{meager}
Let \xt\ be an aperiodic zero-dimensional system. Then there exists a system of markers satisfying the conditions (1) and (2) of Theorem \ref{markers} and such that the set $M$ of arrays which have a marker of infinite depth is of I-st category.
\end{lem}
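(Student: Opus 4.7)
The plan is to construct the markers so that the closed set $M_0 := \bigcap_k F_k$ has empty interior in $X$. Once this is achieved, a marker of infinite depth at column $n$ in an array $x$ is exactly the condition $T^n x \in M_0$, so $M = \bigcup_{n \in \s} T^{-n}(M_0)$ is a countable union of closed sets each of which is nowhere dense. For $\s=\z$ this is immediate since $T$ is a homeomorphism and preserves nowhere-denseness; the case $\s=\na_0$ is analogous, replacing the basis below by the countable family of all cylinders of $X$ located at arbitrary starting columns, so that each $T^{-n}(F_k)$ is also forced to omit such cylinders for some $k$. In either case $M$ is of first category.

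To arrange that $M_0$ has empty interior, I would fix a countable basis $\{U_j\}_{j\ge 1}$ of nonempty clopen subsets of $X$ (enlarged as above if $\s=\na_0$) and construct $F_k$ inductively with the additional property $U_k\not\subseteq F_k$. This forces $U_j\not\subseteq F_j\supseteq M_0$ for every $j$, and so no basic open set lies in $M_0$.

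At stage $k$, given $F_1\supseteq\cdots\supseteq F_{k-1}$, I would first produce a preliminary $n_k$-marker $\tilde F_k\subseteq F_{k-1}$ by the construction in the proof of Theorem \ref{markers} (equivalently, by applying Theorem \ref{krieger} to the induced aperiodic zero-dimensional system on $F_{k-1}$). If $U_k\not\subseteq\tilde F_k$, set $F_k:=\tilde F_k$. Otherwise, pick any $x\in U_k$ and, using aperiodicity and zero-dimensionality, choose a clopen neighborhood $V\subseteq U_k$ of $x$ so small that $V, T^{-1}V,\ldots,T^{-(L-1)}V$ are pairwise disjoint, where $L:=2n_k^{\max}$ and $n_k^{\max}$ is the maximum gap of $\tilde F_k$. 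Then set $F_k:=\tilde F_k\setminus V$.

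The hard part will be verifying that this deletion preserves condition (2) of Theorem \ref{markers}. Nesting and $n_k$-separation of $F_k$ are clear from $F_k\subseteq\tilde F_k\subseteq F_{k-1}$, so the lower bound $n_k^{\min}\ge n_k$ is automatic. The delicate step is the upper bound on gaps of $F_k$: the $L$-separation of $V$ implies that along any orbit any two deleted $\tilde F_k$-positions are at distance at least $L$, and a short count then shows that between two consecutive surviving positions at most a bounded number (in fact at most four) of $\tilde F_k$-positions can have been deleted, so the new maximum gap is at most $4n_k^{\max}$. Thus $F_k$ remains an $n_k$-marker with $n_k^{\min}\to\infty$, fulfilling (2) (condition (3) of Theorem \ref{markers} may be lost, but is not required in this lemma). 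By construction $V\subseteq U_k\setminus F_k$ is nonempty, which gives $U_k\not\subseteq F_k$ and closes the induction.
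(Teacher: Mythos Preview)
Your argument is correct and reaches the same conclusion, but by a different route than the paper. The paper first constructs \emph{any} marker system satisfying (1) and (2), and then modifies it \emph{a posteriori}: the key observation is that the closed set $F_\infty=\bigcap_k F_{n_k}$ is visited by every orbit at most once (since any two visits would be at least $n_k^{\min}$ apart for every $k$). Hence, if $F_\infty$ has nonempty interior $U$, one writes $U=\bigcup_k U_k$ as an increasing union of clopen sets and replaces each $F_{n_k}$ by $F'_{n_k}=F_{n_k}\setminus U_k$; since $U_k\subset F_\infty$ is visited at most once by each orbit, this deletes at most one marker position and so at most doubles the maximal gap, while the new intersection $F'_\infty=F_\infty\setminus U$ has empty interior. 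Your approach is instead inductive: you force each basic open set $U_k$ out of $F_k$ one stage at a time by excising a sufficiently separated clopen piece $V$. Both methods are sound and give essentially the same gap bound---indeed, your own separation argument actually shows that at most \emph{one} $\tilde F_k$-position can be deleted between consecutive survivors (two deletions would be $\ge L=2n_k^{\max}$ apart but $\le n_k^{\max}$ apart as consecutive $\tilde F_k$-visits), so your new maximal gap is $\le 2n_k^{\max}$, matching the paper exactly rather than the looser $4n_k^{\max}$ you stated. The paper's route is shorter because the single global fact about $F_\infty$ replaces your per-stage separation bookkeeping; your route has the minor advantage of never having to isolate the interior $U$ as a separate object.
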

\begin{proof}
By the Baire theorem it suffices to arrange that the set $F_\infty = \bigcap_k F_{n_k}$ of arrays with a marker of infinite depth at the coordinate $0$ is of I-st category. Clearly, $F_\infty$ is closed, so we only need to arrange that it has empty interior. First observe that $F_\infty$ is always visited by each orbit at most once, because the gap between any two such visits must be larger than or equal to $n_k^{\min}$ for every $k$. Suppose $F_\infty$ has a nonempty interior $U$. We can represent $U$ as an increasing union of clopen sets, $U =\bigcup_k U_k$. Let $F'_{n_k}=F_{n_k}\setminus U_k$. Clearly, this is a nested \sq\ of clopen sets. Each orbit visits $F'_{n_k}$ at the same times as $F_{n_k}$ except at most one time, which implies that the gaps between the visits are bounded from below by ${n'}_k^{\min}=n_k^{\min}$ and above by ${n'}_k^{\max} = 2n_k^{\max}$. Thus the system of sets $F'_{n_k}$ induces a new system of markers satisfying the conditions (1) and (2) of Theorem~\ref{markers}. Moreover, we have $F'_\infty = \bigcap_k F'_{n_k} = F_\infty\setminus U$, which has empty interior.
\end{proof}

\begin{ques}
Can the system of markers in Lemma \ref{meager} be also made balanced?
\end{ques}

\section{Application of markers to entropy and vertical data compression}

By ``vertical data compression'' we shall mean a conjugate representation of a subshift (or another zero-dimensional system) using the smallest possible number of symbols. This procedure is in a sense orthogonal to the usual ``horizontal'' data compression used in information theory, where blocks are shrunk in length (without losing the information contents). Our vertical compression maintains the lengths, instead, if we imagine symbols of a large alphabet as high $\{0,1\}$-valued columns, it reduces the ``height'' of the representation. This justifies our terminology.
\smallskip

But first we need to devote few pages to entropy. Recall, that \tl\ entropy of a subshift \xt\ equals
$$
\htop(X,T)=\lim_n \frac1n\log(\#\B_n(X)),
$$
(we let ``$\log$'' denote the logarithm to base $2$), where $\B_n(X)$ is the collection of all blocks of length $n$ appearing in (some elements of) $X$.

\begin{thm}
Let \xt\ be an aperiodic zero-dimensional system equipped with a standard system of markers satisfying (1)-(3). Denote by $\R_k(X)$ the collection of all $k$-rectangles appearing in $X$. Then, if the numbers $n_k^{\min}$ grow sufficiently fast, we have
$$
\htop(X,T) = \lim_k \frac1{n_k^{\min}} \log(\#\R_k(X)).
$$
\end{thm}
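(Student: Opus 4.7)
The plan is to prove the two matching inequalities $\htop(X,T) \leq \liminf_k \frac{1}{n_k^{\min}}\log\#\R_k(X)$ and $\limsup_k \frac{1}{n_k^{\min}}\log\#\R_k(X) \leq \htop(X,T)$ separately. The entry point is to enrich each $\Lambda_k$ to $\Lambda'_k$ by a marker symbol, so that $X$ is (conjugate to) the inverse limit of the markered top-$k$-rows factors $X_{[1,k]}$, which are honest subshifts over the finite alphabets $\Delta'_k = \Lambda'_1\times\cdots\times\Lambda'_k$. By the classical inverse-limit formula for topological entropy, $\htop(X,T) = \lim_k \htop(X_{[1,k]})$, so it suffices to compare $\frac{1}{n_k^{\min}}\log\#\R_k(X)$ with $\htop(X_{[1,k]})$ and then let $k\to\infty$.

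For the upper bound on $\htop(X_{[1,k]})$, I would read the marker structure off of any length-$L$ block of $X_{[1,k]}$. Such a block decomposes canonically into a left partial $k$-rectangle (a suffix of some element of $\R_k(X)$), a consecutive run of complete $k$-rectangles, and a right partial $k$-rectangle (a prefix of some element of $\R_k(X)$). By conditions (1)-(2) the middle run contains at most $\lceil L/n_k^{\min}\rceil$ complete rectangles, and each partial is determined by an element of $\R_k(X)$ together with a cut-point in $[0,n_k^{\max})$. This yields
$$\#\B_L(X_{[1,k]}) \;\leq\; (n_k^{\max})^2\cdot\#\R_k(X)^{\lceil L/n_k^{\min}\rceil+2}.$$
Taking logarithms, dividing by $L$, and letting $L\to\infty$ gives $\htop(X_{[1,k]}) \leq \frac{1}{n_k^{\min}}\log\#\R_k(X)$, so $\htop(X,T) = \lim_k\htop(X_{[1,k]}) \leq \liminf_k \frac{1}{n_k^{\min}}\log\#\R_k(X)$.

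For the reverse direction, every $k$-rectangle of width $\ell\in[n_k^{\min},n_k^{\max}]$ is itself a length-$\ell$ block of $X_{[1,k]}$, and $\#\B_n(X_{[1,k]})$ is non-decreasing in $n$ (the map $\B_{n+1}\to\B_n$ taking the first $n$ letters is surjective), hence
$$\#\R_k(X) \;\leq\; (n_k^{\max}-n_k^{\min}+1)\cdot\#\B_{n_k^{\max}}(X_{[1,k]}).$$
By subadditivity of $\log\#\B_n$, for each fixed $k$ there is a threshold $N_k$ past which $\frac{1}{n}\log\#\B_n(X_{[1,k]}) \leq \htop(X_{[1,k]}) + 1/k$. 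The phrase ``if the numbers $n_k^{\min}$ grow sufficiently fast'' is precisely the diagonal requirement $n_k^{\min}\geq N_k$, which can be realized during the construction of the marker system. Dividing the previous display by $n_k^{\min}$ gives
$$\frac{1}{n_k^{\min}}\log\#\R_k(X) \;\leq\; \frac{\log(n_k^{\max}-n_k^{\min}+1)}{n_k^{\min}} + \frac{n_k^{\max}}{n_k^{\min}}\bigl(\htop(X_{[1,k]}) + \tfrac{1}{k}\bigr);$$
condition (3) yields $n_k^{\max}/n_k^{\min}\to 1$, and $\htop(X_{[1,k]})\to\htop(X,T)$, so the $\limsup$ of the left-hand side is at most $\htop(X,T)$. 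Combining the two inequalities forces the common value $\htop(X,T)$.

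The main obstacle is the second inequality: the finite-length rate $\frac{1}{n}\log\#\B_n(X_{[1,k]})$ converges to $\htop(X_{[1,k]})$ from above, but at a speed that depends on $k$. This is exactly why the ``sufficiently fast'' diagonal condition on $n_k^{\min}$ is needed. The balance condition (3) is also essential: without $n_k^{\max}/n_k^{\min}\to 1$, the mismatch between the normalizations $1/n_k^{\min}$ and $1/n_k^{\max}$ would leave a multiplicative gap that no choice of $n_k^{\min}$ could close.
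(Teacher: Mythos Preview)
Your proof is correct and follows essentially the same route as the paper's: both arguments use the inverse-limit formula $\htop(X,T)=\lim_k\htop(X_{[1,k]})$, bound $\#\B_L(X_{[1,k]})$ above by a power of $\#\R_k(X)$ via the covering by consecutive $k$-rectangles, and for the reverse inequality compare $\#\R_k(X)$ to $\#\B_{n_k^{\max}}(X_{[1,k]})$ and invoke the diagonal condition $n_k^{\min}\ge N_k$ together with the balance condition~(3). Your treatment of the partial rectangles and the extra factor $(n_k^{\max}-n_k^{\min}+1)$ are harmless refinements of the same estimates.
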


\begin{proof}
First of all, it is true that the entropy of $(X,T)$ represented as an inverse limit $\overset{\longleftarrow}\lim_k(X_{[1,k]})$ equals $\lim_k\htop(X_{[1,k]})$ (this limit is nondecreasing).
In order to estimate $\htop(X_{[1,k]})$ we need to estimate the number of all rectangles of some large length $N$ appearing in the top $k$ rows of $X$. Any such rectangle is covered by at most $\frac N{n_{k'}^{\min}}+1$ concatenated $k'$-rectangles. Thus the wanted number of blocks of length $N$ does not exceed
$$
(\#\R_{k'}(X))^{\frac N{n_{k'}^{\min}}+1}.
$$
Taking logarithm, dividing by $N$, passing to a limit in $N$, then applying $\liminf$ in $k'$, and finally applying the limit over $k$, we get
$$
\htop(X)\le \liminf_{k'}\frac 1{n_{k'}^{\min}}\log(\#\R_k(X)).
$$
Now, by change of notation, we can skip the ``prime'' over $k$.
\smallskip

For the converse inequality, we know that given a decreasing to zero \sq\ $\epsilon_k>0$, for every $k$, if $n$ is large enough (larger than some $n(k)$), then
$$
\htop(X_{[1,k]}) \ge \frac1n\log(\#\B_n(X_{[1,k]}))-\epsilon_k.
$$
We must now assume that $n_k^{\min}\ge n(k)$ (this is the meaning of the assumption that $n_k^{\min}$ grow ``sufficiently fast''). Clearly, the collection of all blocks of length $n_k^{\max}$ in the top $k$ rows is at least as rich as $\R_k(X)$. This yields
$$
\htop(X_{[1,k]}) \ge \frac1{n_k^{\max}}\log(\#\R_k(X))-\epsilon_k.
$$
Letting $k$ pass to infinity we get
$$
\htop(X) \ge \limsup_k\frac1{n_k^{\max}}\log(\#\R_k(X)).
$$
By (3), we can replace $n_k^{\max}$ by $n_k^{\min}$. This ends the proof.
\end{proof}

\begin{exam} The assumption on the growth of $n_k^{\min}$ (even with (3) fulfilled) is necessary. Here is an appropriate example: The structure of markers is arbitrary for which the condition (1)-(3) are fulfilled. For instance, we can let it be as in the dyadic odometer: $n_k^{\max}=n_k^{\min}= 2^k$. The alphabets $\Lambda_k$ are also completely arbitrary (at least two-element), it will be convenient to assume that all of them contain $0$. To be specific, let $\Lambda_k=\{0,1,\dots,k\}$. We define $\Lambda_k'$ the usual way by adding the markers. Now we specify the ``allowed'' $k$-rectangles, by the following, very simple rule: only one $k'$-block (for only one $k'\le k$) inside the $k$-rectangle is not filled with zeros. However, this unique block (call it the \emph{free block}) can be arbitrary over $\Lambda_{k'}$ (naturally with the marker at the end). We define $X$ as the array system consisting of points whose all rectangles are allowed. The number of $k$-rectangles (whose lengths are $2^k$) exceeds $k^{2^k}$ (it suffices to count the rectangles with the free block in the last row), hence $\frac1{n_k^{\min}}\log(\#\R_k)=\log k$ tends to infinity. On the other hand, it is seen that in any element there are at most two free blocks (possibly in two different rows), and only if there exists a marker of infinite depth (otherwise, the free block is unique), so the system is strongly proximal
(all orbits converge both forward and backward to the unique fixpoint, the zero array). The entropy of such a system is zero.
\end{exam}

\begin{rem}\label{qq}
If \xt\ is a subshift then the assumption on the growth of $n_k^{\min}$ can be skipped. We leave the easy argument to the reader.
\end{rem}

\begin{rem}\label{qr}
If the system of markers does not satisfy (3), one can still calculate the \tl\ entropy by counting $k$-rectangles. Let $\R^n_k(X)$ denote the collection of $k$-rectangles occurring in $X$, whose length
is precisely $n\in[n^{\min}_k,n^{\max}_k]$. Then one has:
$$
\htop(X,T) = \lim_k\ \max\Bigl\{ \frac1n\log(\#\R^n_k(X)): n^{\min}_k\le n\le n^{\max}_k\Bigr\}.
$$
The proof is a bit complicated and we will skip it.
\end{rem}

\begin{thm}[Vertical Data Compression, see \cite{Kr82}]
Let \xt\ be an aperiodic subshift whose \tl\ entropy is $h$. Then \xt\ is conjugate to a subshift on
$\ell$ symbols, where $\ell$ is the smallest integer strictly larger than $2^h$.
\end{thm}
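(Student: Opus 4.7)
The plan is to apply the balanced-marker construction of Theorem \ref{markers} to $(X,T)$, partitioning each point into a bi-infinite concatenation of rectangles of nearly equal length, and then to recode rectangle by rectangle using an alphabet of size $\ell$, with each rectangle prefixed by a carefully chosen self-parsing marker word of length $L$.

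First I would fix $\epsilon>0$ with $h+\epsilon<\log\ell$, which is possible precisely because $\ell>2^h$. As the marker word I take $W=0^{L-1}1\in\{0,\dots,\ell-1\}^L$, whose only self-overlap is trivial. A standard calculation gives that the entropy $\log f(L)$ of the SFT on $\ell$ symbols defined by forbidding $W$ as a factor satisfies $\log f(L)\to\log\ell$ as $L\to\infty$; I would pick $L$ large so that $\log f(L)>h+\epsilon$. Then, applying Theorem \ref{markers} to the aperiodic zero-dimensional system $(X,T)$ at level $k=1$, I obtain a balanced marker system in which every rectangle has length $n$ or $n+1$, with $n$ a large integer at my disposal. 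Taking $n$ sufficiently large (in particular, much larger than $L$), and using that $\#\B_m(X)$ grows at exponential rate $2^h$ while the admissible length-$m$ code words of the form $Wc$ with $|c|=m-L$ and $c$ avoiding $W$ grow at rate $f(L)>2^{h+\epsilon}$, I conclude that for $m\in\{n,n+1\}$ the number of such code words strictly exceeds $\#\B_m(X)$. I can therefore fix, for each such $m$, an injective assignment of every rectangle of length $m$ in $X$ to a code word of the same length.

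Define $\phi:X\to\{0,\dots,\ell-1\}^\s$ by concatenating these code words along each orbit. Because the marker set is clopen in $X$ and rectangles have bounded length, the value of $\phi(x)$ at each coordinate depends only on a bounded window of $x$, so $\phi$ is a sliding block code, continuous and shift-equivariant. Setting $Y=\phi(X)$, this is compact and shift-invariant, hence a subshift over $\ell$ symbols.

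The crux is to verify that $\phi$ is injective with continuous inverse, and for this it suffices to show that in every $\phi(x)$ the word $W$ occurs exactly at those positions where a rectangle begins. This is the main technical obstacle; it hinges on the combined properties of $W=0^{L-1}1$, and a short case analysis using the non-self-overlap of $W$ together with the $W$-avoidance of each $c$ rules out spurious occurrences of $W$, whether entirely inside a single code word or straddling the boundary between two consecutive ones. Granted this, the marker positions can be recovered from $\phi(x)$ by locating the occurrences of $W$, so $\phi^{-1}$ is itself a sliding block code on $Y$, yielding the desired topological conjugacy between $(X,T)$ and the $\ell$-symbol subshift $Y$.
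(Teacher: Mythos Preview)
Your proposal is correct and follows the same overall strategy as the paper: place balanced markers to cut each point into rectangles of bounded length, then recode rectangle by rectangle via an injection into a ``recognizable'' (uniquely parsable) family of blocks over $\ell$ symbols. The paper does exactly this, invoking an abstract recognizable family whose cardinalities $c(n)$ satisfy $\frac1n\log c(n)\to\log\ell$ (quoted as a combinatorial exercise), together with the preceding theorem on $\htop$ via $k$-rectangle counts to bound $\#\R_k(X)$.

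Your version differs only in implementation: you exhibit the recognizable family explicitly as the words $Wc$ with $W=0^{L-1}1$ and $c$ avoiding $W$, and you bound the number of rectangles directly by $\#\B_m(X)$ rather than by appealing to the rectangle-entropy theorem. This makes your argument more self-contained for the subshift case. The parsing verification you flag as ``the main technical obstacle'' is straightforward: the last symbol of $W$ is $1$ while every proper prefix of $W$ consists only of $0$'s, so an occurrence of $W$ ending strictly inside the $W$-prefix of the next code word is impossible, and occurrences inside a single code word are ruled out by non-self-overlap of $W$ and by $W$-avoidance of $c$.
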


\begin{proof}
Let $\epsilon>0$ be such that $\ell>2^{h+\epsilon}$ and let $k$ be so large that
$$
\frac1{n_k^{\min}}\log(\#\R_k(X)) < h+\epsilon, \text{ \ \ \ i.e., \ \ \ } \#\R_k(X) < 2^{n_k^{\min}(h+\epsilon)}.
$$
We will only use $k$-markers for this particular index $k$, and we place them in the first (unique) row of the subshift. Now $k$-rectangles are the same as $k$-blocks. We will need the following combinatorial fact
(comp. \cite[Exercise 3.8]{Do11}): there exists a \emph{recognizable family}\footnote{A family of blocks is recognizable, if any two-sided or even one-sided (starting with a possible incomplete block) concatenation of its members decomposes in a unique way.} of blocks over $\ell$ symbols such that the cardinalities $c(n)$ of blocks
of length $n$ in this family satisfy
$$
\lim \frac1n\log c(n) \to \log \ell.
$$
This implies that eventually $c(n)$ is larger than $2^{n(h+\epsilon)}$. Thus, for large $k$, the numbers $c(n_k^{\min}), c(n_k^{\min}+1),\dots, c(n_k^{\max})$ are larger than $\#\R_k(X)$ and hence there exists an injection $\Phi$ assigning to each $k$-rectangle (i.e., a $k$-block) $R$ some block $\Phi(R)$ of length $|R|$ from the recognizable family. Now it suffices to replace, in every $x\in X$ the consecutive $k$-blocks by their images by $\Phi$. Because of the recognizability property, this coding is reversible, so we have produced a conjugate subshift on $\ell$ symbols, as required.
\end{proof}

\begin{ques}
How can the aperiodicity assumption be relaxed? Clearly, any subshift over $\ell$ symbols has at most $\ell^n$ periodic points with period $n$. Would this be a sufficient restriction?
\end{ques}

\begin{thm}[Vertical data compression of zero-dimensional systems]
Every aperiodic and zero-dimensional $\z$-action is conjugate to a ``subshift'' over the countable alphabet $\{1,2,\dots,\infty\}$ (the one-point compactification of $\na$).
\end{thm}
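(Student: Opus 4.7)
The plan is to use the hierarchical marker structure from Theorem \ref{markers} to build an explicit continuous, shift-equivariant injection $\phi \colon X \to (\na \cup \{\infty\})^\z$, where the target carries the one-point-compactification topology on each coordinate. First I fix a standard markered array representation of \xt\ with nested clopen $n_k$-marker sets $F_1 \supset F_2 \supset \cdots$, and define the \emph{marker depth} at position $n$ of a point $x$ by $d(n,x) = \sup\{k \ge 0 : T^n x \in F_k\}$, using the convention $F_0 = X$, so that $d(n,x)$ takes values in $\{0,1,2,\dots\}\cup\{\infty\}$. By property (2) of Theorem \ref{markers} the gaps between consecutive $k$-markers are bounded above by $n_k^{\max}$, so the $k$-rectangle ending at any given $k$-marker depends on a bounded window in the array and is a well-defined element of the finite set $\R_k(X)$.

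Next I enumerate the countable set $\{*\} \sqcup \bigsqcup_{k \ge 1} \R_k(X)$ (where $*$ stands for ``no marker'') bijectively with $\na$ by listing $*$ first, then the elements of $\R_1(X)$, then $\R_2(X)$, and so on. The essential feature of this ordering is that every code $\ge N$ corresponds to a rectangle of depth at least $K(N)$, with $K(N) \to \infty$ as $N \to \infty$. I then set $\phi(x)_n$ equal to the code of $*$ when $d(n,x) = 0$, the code of the $k$-rectangle of $x$ ending at $n$ when $d(n,x) = k \in \na$, and the symbol $\infty$ when $d(n,x) = \infty$.

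Continuity of $\phi$ reduces to checking each coordinate map $x \mapsto \phi(x)_n$. For any finite symbol $c$, the fibre is clopen, being cut out by the clopen conditions $T^n x \in F_k \setminus F_{k+1}$ and by finitely many bounded-window conditions identifying the $k$-rectangle. For a basic neighborhood $\{c : c \ge N\}\cup\{\infty\}$ of $\infty$, the depth-ordered enumeration forces the preimage to equal $T^{-n}(F_{K(N)})$, which is clopen by nesting. For injectivity, from $\phi(x)$ one reads, for each $k$, every position with $d(n,x) \ge k$, and the rectangle encoded there determines the rows $1$ through $k$ back to the previous $k$-marker; since $\s = \z$ the $k$-markers recur bilaterally with bounded gaps, so the top $k$ rows of $x$ are reconstructed for every $k$. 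Shift-equivariance is immediate, compactness of $X$ plus continuity of $\phi$ make $\phi(X)$ a closed shift-invariant subset of $(\na \cup \{\infty\})^\z$ (i.e.\ a subshift in the stated sense), and $\phi$ is a homeomorphism onto this image.

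The only real obstacle is continuity at the compactification point $\infty$: a naive labelling of the $(k,R)$ pairs might let a neighborhood of $\infty$ pull back to a set that is open but not clopen. Ordering the enumeration by marker depth $k$ removes the difficulty, because then ``large code at position $n$'' becomes literally equivalent to ``$T^n x$ lies in the deeply nested clopen set $F_k$'', and the preimage of a cofinite-plus-$\infty$ neighborhood is exactly $T^{-n}(F_k)$.
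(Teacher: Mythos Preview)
Your overall strategy---using the marker depth at each coordinate to decide what to encode there---is natural, and the continuity argument is fine (in fact the depth-ordered enumeration is not even needed for continuity: once each finite symbol has clopen preimage, every cofinite-plus-$\infty$ neighborhood automatically has clopen preimage as the complement of finitely many clopen sets). However, the injectivity step contains a genuine gap.

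The nested clopen sets $F_k$ are all nonempty, so by compactness $F_\infty=\bigcap_k F_k\neq\emptyset$. Hence there always exist arrays $x$ carrying an infinite-depth marker, say at position~$0$. At that coordinate you write $\phi(x)_0=\infty$ and record no rectangle. Now fix $k$ and let $p<0$ be the previous $k$-marker. The $k$-rectangle of $x$ ending at $0$ (rows $1$ through $k$ over $(p,0]$) is not encoded at $0$; and it is not encoded anywhere else either, because at every other $k$-marker position $n$ the depth is some finite $j\ge k$, position $0$ is itself a $j$-marker, and therefore the $j$-rectangle ending at $n$ lies entirely on one side of $0$ and never meets $(p,0]$. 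Thus the contents of rows $1,\dots,k$ over $(p,0]$ are simply lost. In a sufficiently rich system---for instance the direct product of the dyadic odometer with the full $2$-shift, with markers taken from the odometer factor---one can exhibit distinct arrays $x\neq x'$ differing only on this interval in row~$1$, yet satisfying $\phi(x)=\phi(x')$. So $\phi$ is not injective.

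The paper's construction avoids this by a different placement scheme: it processes the rows one at a time, placing the label of each $k$-block at the first still-unoccupied coordinate \emph{inside that block's own horizontal span} (after the labels for rows $<k$ have been placed), and only the leftover coordinates receive~$\infty$. In that scheme every $k$-block is recorded at some coordinate, so nothing is lost near infinite-depth markers. A minimal repair of your scheme is to record, at each position $n$ of finite depth $j$, the \emph{pair} consisting of the $j$-rectangle ending at $n$ together with the $j$-rectangle beginning at $n$; the second component, read at the position $p$ above, recovers precisely rows $1,\dots,k$ over $(p,0]$.
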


\begin{proof} This theorem appears as an exercise in \cite[Exercise 7.3]{Do11}. Here we provide a complete proof. Enumerate the set of all $k$-rectangles $\R=\R_1(X)\cup\R_2(X)\cup\cdots$ by natural numbers, bijectively and non-decreasingly in length. We compactify $\R$ by one point. This is going to be our countable alphabet $\Lambda$.

We define the map $\phi$ from $X$ into the shift over $\Lambda$ by describing the image $y=\phi(x)\in\Lambda^\z$ of every $x\in X$. We will encode $x$ ``row after row''. We encode the first row by placing in $y$, at the positions of all $1$-markers in $x$, the labels representing the $1$-blocks (equivalently $1$-rectangles, the elements of $\R_1(X)$) that follow these markers in $x$. Since we can assume that $n_1^{\min}\ge 2$, every sector in $y$ between the positions of two consecutive $1$-markers has at least one unfilled position. We now encode the second row of $x$ by placing in $y$, in the first empty slot between two $2$-markers of $x$, the labels representing the 2-blocks sitting there in the second row of $x$. Assuming that  $n_2^{\min}\ge 2n_1^{\max}$, after this step every sector in $y$ between two $2$-markers has at least one empty slot. We continue in this manner through all rows. All eventually unfilled positions in $y$ we fill with the infinities. It is clear that so defined map $x\mapsto y$ is continuous: every symbol (except the infinity) in $y$ is determined by a bounded rectangle in $x$ (i.e., its preimage is clopen). The infinity alone is not an open set, while any open neighborhood of the infinity is a complement of finitely many other symbols, so its preimage is also a clopen set. So the map is continuous. It is evident that so defined map $\phi$ commutes with the shift transformation (this would fail for $\s=\na_0$ because of the initial incomplete $k$-rectangle). To see that it is injective, note that we can easily reconstruct from $y$ the consecutive rows of $x$, as follows: For $k=1$, we locate in $y$ the symbols corresponding to the elements $\R_1(X)$. Their positions determine the $1$-markers and the symbols themselves provide information about the contents of the corresponding $1$-blocks in $x$. We continue inductively: Suppose the $k$th row of $x$ has been reconstructed (together with the $k$-markers). We locate in $y$ all symbols labeling the elements of $\R_{k+1}(X)$, and then we ``unload'' their contents each time starting at the nearest $k$-marker to the left, where we also place a $(k\!+\!1)$-marker. So, the map $\phi$ is a \tl\ conjugacy of $X$ with its image.
\end{proof}

To see that periodic points are an obstacle, take the identity map on the Cantor set. Every point is a fixpoint, so in any ``subshift'' (even with an infinite alphabet) it must be represented by a sequence filled with one symbol. Thus, uncountably many symbols are needed to encode all points.

\smallskip
To see how the above fails in non-injective systems, consider a system in which a Cantor set is sent by $T$ to one point (say $x$). No matter how we encode the system as a unilateral shift, the \sq\ representing $x$ must admit uncountably many shift-preimages, that is one-coordinate prolongations to the left. So, an uncountable alphabet is needed.

\begin{ques}
How can the aperiodicity assumption be relaxed? Of course, any subshift over a countable alphabet has at most countably many periodic points. Would this be a sufficient assumption?
\end{ques}

\section{Application of markers to the creation of minimal models}

Some ``block manipulations'' (or ``block codes'') depend on a bounded area in the array. These are continuous. For example, suppose that $\Lambda_k=\{0,1\}$ and that we want to flip (replace zeros by ones and vice versa) the symbols which fall at the positions of the markers. Not only this is a continuous, but also an invertible code. However, sometimes we need block manipulations which are not exactly continuous only ``finitary'', i.e., in ``most'' arrays they depend on a finite (but not uniformly bounded) area. This leads to ``exceptional arrays'', in which the manipulation cannot be determined by any finite area. In such arrays we usually have several choices of how they should be transformed. These are discontinuities of the algorithm. For example, suppose that we want to flip, in the first row, these symbols which fall at positions of markers whose depths are odd. In most points we will know exactly what to do, but there are points in which there occur markers of infinite depth. At such points the algorithm is not determined. Nevertheless, because such an infinite marker may appear in an array at most once, the set of discontinuities of such an algorithm has universal measure zero (we will say that they form a \emph{null set}). This is the meaning of a ``finitary algorithm'': the set of discontinuities is null. Most codes that refer to markers in distant rows are finitary (because markers in a distant row are far apart). At discontinuity points we give up defining the images so that finitary maps are left undefined on a null set. As we want the image to be a \ds, we define it as the closure of the image of the set of arrays on which the map is defined. This leads to the following definition:

\begin{defn}
Let \xt\ and \ys\ by \tl\ \ds s. By a \emph{finitary factor map} we mean any equivariant, partially defined and not necessarily surjective, map $\phi:X'\to Y$ defined and continuous on an invariant and dense\footnote{Invariance can be easily achieved by reducing $X'$ to a slightly smaller set. Density can be satisfied by replacing $X$ by $\overline{X'}$. Since the latter set supports all invariant probability measures, these changes are inessential from the point of view of invariant measures.} subset $X'\subset X$ with null complement.
By the \emph{image} or \emph{finitary factor of \xt} we mean the closure $\overline{Y'}$ in $Y$, where $Y'=\phi(X')$.
\end{defn}

\begin{fact}
The finitary factor is invariant under $S$ and $Y'$ is a set of full \im\ in $\overline{Y'}$.
\end{fact}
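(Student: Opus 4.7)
The plan is to prove the two assertions separately. The $S$-invariance of $\overline{Y'}$ is essentially formal; the substantive claim is that every $S$-invariant probability on $\overline{Y'}$ gives full mass to $Y'$, and I would establish this by lifting such a measure through the closure of the graph of $\phi|_{X'}$ and exploiting the nullness of $X\setminus X'$.

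For the $S$-invariance, since $X'$ is $T$-invariant and $\phi\circ T=S\circ\phi$ on $X'$, one has $S(Y')=S(\phi(X'))=\phi(T(X'))\subset\phi(X')=Y'$, and continuity of $S$ upgrades this to $S(\overline{Y'})\subset\overline{Y'}$.

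For the measure assertion, let $\Gamma\subset X\times\overline{Y'}$ denote the closure of the partial graph $\{(x,\phi(x)):x\in X'\}$, and observe the key identity
$$
\Gamma\cap(X'\times\overline{Y'})=\{(x,\phi(x)):x\in X'\},
$$
which follows from the continuity of $\phi$ at points of $X'$: any sequence in the graph whose first coordinate tends to some $x\in X'$ must have second coordinate tending to $\phi(x)$. The set $\Gamma$ is compact and $(T\times S)$-invariant, and the projection $\pi_Y:\Gamma\to\overline{Y'}$ is a continuous equivariant surjection (surjectivity uses compactness of $X$: any $y\in\overline{Y'}$ is a limit of values $\phi(x_n)$, along a subsequence of which $x_n$ converges in $X$). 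Given an $S$-invariant Borel probability $\nu$ on $\overline{Y'}$, I would pick any Borel lift of $\nu$ to $\Gamma$, time-average it under $T\times S$, and extract a weak-star accumulation point $\lambda$; the $S$-invariance of $\nu$ guarantees that the second marginal remains $\nu$ throughout the averaging, so $\lambda$ is a $(T\times S)$-invariant lift of $\nu$.

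Once $\lambda$ is in hand, the conclusion is a quick chase. The first marginal $\mu:=(\pi_X)_*\lambda$ is $T$-invariant, hence $\mu(X\setminus X')=0$ by nullness of the complement, i.e.\ $\mu(X')=1$. Combining this with the graph identity,
$$
\nu(Y')=\lambda(X\times Y')\ge\lambda\bigl(\Gamma\cap(X'\times\overline{Y'})\bigr)=\lambda(X'\times\overline{Y'})=\mu(X')=1.
$$
The main delicate point is the production of the invariant lift $\lambda$; everything else follows mechanically from the graph identity and the definition of a null set.
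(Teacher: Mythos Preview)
Your argument is correct and matches the paper's approach: both pass to the closure of the graph of $\phi$, use continuity of $\phi$ on $X'$ to show that this closure meets $X'\times Y$ exactly in the graph, and then conclude that any invariant measure on the graph closure gives full mass to the graph (since its first marginal is $T$-invariant and hence concentrated on $X'$). You are simply more explicit than the paper in producing the invariant lift of $\nu$ via a Krylov--Bogolyubov averaging, whereas the paper tacitly invokes the standard fact that invariant measures lift through topological factor maps.
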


\begin{proof}
Invariance is obvious by continuity of $S$. Consider the graph $\Phi$ of $\phi$ and its closure $\overline\Phi$, together with the product action. For obvious reasons, $\overline\Phi$ is a \tl\ \ds\ and the projection $\pi_1$ on the first axis is a factor map onto \xt. Since $\phi$ is continuous at all points of $X'$, the set $\overline\Phi$ enlarges $\Phi$ only by containing more points projecting to $X\setminus X'$. This implies that $\Phi$ equals the $\pi_1$-preimage of $X'$. So, it is a set of full \im\ in $\overline\Phi$, while $B=\overline\Phi\setminus\Phi$ is a null set in $\overline\Phi$.
Now observe the projection $\pi_2$ on the second axis. Since $\pi_2(\Phi) = Y'$ and $\pi_2(\overline\Phi) = \overline{Y'}$, we have that $\pi_2^{-1}(\overline{Y'}\setminus Y')\subset B$. By completeness of the sigma-algebras, we get that $\overline{Y'}\setminus Y'$ is a null set.
\end{proof}

\begin{defn}
A \emph{finitary isomorphism} is an equivariant map which is defined, continuous, and injective on a dense invariant subset $X'\subset X$ of full \im.
\end{defn}

We remark that the inverse map to a finitary isomorphisms need not be a finitary isomorphism. It may happen that the inverse is so badly discontinuous that it remains discontinuous after removing any null set.
\smallskip

Finitary isomorphisms are almost as good as isomorphic extensions. They share many of their excellent features, listed in Fact \ref{4.4} below. However, by being not exactly continuous, they slightly perturb the topological structure of the modeled system.

\begin{fact}\label{4.4}
If $\phi:(Y,S)\to(X,T)$ is a finitary isomorphism then the adjacent map $\phi$ on \im s is an affine homeomorphisms between the sets of \im s of these systems and $\phi$ also serves as a measure-theoretic isomorphism between the systems \xmt\ and \yns, where $\mu$ is any \im\ on $X$ and $\nu = \phi\mu$.
\end{fact}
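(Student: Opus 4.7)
The plan is to promote $\phi$ to a Borel bimeasurable bijection between full-measure invariant Borel subsets of $Y$ and $X$, after which each of the three claims follows by standard arguments. By hypothesis, $\phi|_{Y'}$ is a continuous injection defined on a Borel $S$-invariant set $Y'\subset Y$ of full $\nu$-measure for every $\nu\in\msy$. The Luzin--Souslin theorem then guarantees that $X':=\phi(Y')$ is a Borel subset of $X$, that $\phi|_{Y'}\colon Y'\to X'$ is a Borel isomorphism with Borel-measurable inverse, and, by equivariance, that $X'$ is $T$-invariant.

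Fix $\nu\in\msy$ and set $\mu:=\phi_*\nu$; then $\mu(X')=\nu(Y')=1$, so the Borel bijection $\phi|_{Y'}$ conjugates the restricted probability-measure-preserving systems on $Y'$ and $X'$, which yields the measure-theoretic isomorphism between \yns\ and \xmt\ asserted in the fact. Affineness of the adjacent map $\nu\mapsto\phi_*\nu$ is immediate from linearity of the pushforward, and injectivity follows from $\nu(B)=\nu(B\cap Y')=\mu(\phi(B\cap Y'))$ for every Borel $B\subset Y$. Weak-$*$ continuity of $\phi_*$ reduces to the identity $\int g\,d\phi_*\nu=\int g\circ\phi\,d\nu$ for $g\in C(X)$: the integrand $g\circ\phi$ is bounded and continuous on $Y'$, and since $Y\setminus Y'$ is null for every invariant measure, a Portmanteau-type argument gives continuity along weak-$*$ convergent nets.

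The main obstacle I expect is surjectivity of the adjacent map. Given $\mu\in\mtx$, the natural candidate $\nu:=(\phi|_{Y'})^{-1}_*\mu$ is a Borel probability measure on $Y$ concentrated on $Y'$, is $S$-invariant by equivariance of $\phi^{-1}$, and satisfies $\phi_*\nu=\mu$---but only provided $\mu(X')=1$. The bare definition of finitary isomorphism secures universal fullness of $Y'$ only for measures on $Y$, not of $X'$ for measures on $X$; the fact therefore implicitly relies on the symmetric condition that $X\setminus X'$ is universally null as well, in line with the philosophy of finitary algorithms outlined at the start of this section (whose discontinuity sets are null on both sides). Granting this, $\phi_*$ is a continuous affine bijection between the compact metrizable Choquet simplices $\msy$ and $\mtx$, hence automatically a homeomorphism.
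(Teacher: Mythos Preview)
The paper leaves this proof to the reader, so there is no argument to compare against. Your overall strategy via Luzin--Souslin is sound, and most of the pieces are correct. Two points deserve comment.

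First, your concern about surjectivity is legitimate but is already resolved within the paper: the Fact immediately preceding the definition of finitary isomorphism establishes precisely that the image $X'=\phi(Y')$ has full invariant measure in its closure $\overline{X'}$. Since in this context $X$ is understood to be the finitary factor, i.e., $X=\overline{X'}$, this gives $\mu(X')=1$ for every $\mu\in\mtx$, and your candidate $\nu=(\phi|_{Y'})^{-1}_*\mu$ works. You should invoke that Fact rather than treat the symmetric nullness as an additional hypothesis.

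Second, the weak-$*$ continuity step via a ``Portmanteau-type argument'' is not solid as written: $g\circ\phi$ is continuous only on $Y'$, and you have no control over its discontinuity set in $Y$ (the closure of $Y\setminus Y'$ may be all of $Y$), so $\nu_n\to\nu$ in $\msy$ does not obviously force $\int g\circ\phi\,d\nu_n\to\int g\circ\phi\,d\nu$. A cleaner route, parallel to the proof of the preceding Fact, is to pass through the closed graph $\overline\Phi\subset Y\times X$: the coordinate projections $\pi_1,\pi_2$ induce continuous affine maps on invariant measures, and your own arguments (together with the preceding Fact) show that $(\pi_1)_*:\mathcal M_{T\times S}(\overline\Phi)\to\msy$ and $(\pi_2)_*:\mathcal M_{T\times S}(\overline\Phi)\to\mtx$ are bijections. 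A continuous bijection between compact Hausdorff spaces is a homeomorphism, so $\phi_*=(\pi_2)_*\circ(\pi_1)_*^{-1}$ is the desired affine homeomorphism.
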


Once again, the easy proof is left to the reader.
\smallskip

We use this concept to show that every aperiodic zero-dimensional system is ``very close'' to being minimal:

\begin{thm}[Finitary Minimal Models Theorem, see also~\cite{Do06}]
Every aperiodic zero-dimen\-sional system is finitarily isomorphic to a minimal zero-dimensional system.
\end{thm}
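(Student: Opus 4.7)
The plan is to apply the marker machinery to construct, from an aperiodic zero-dimensional system \xt, a minimal zero-dimensional system \ys\ and a finitary isomorphism $\phi:X\to Y$, by iteratively inserting a fixed enumeration of all currently available $k$-rectangles into every $(k+1)$-rectangle at every level $k$.

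Using Theorem~\ref{markers} and Lemma~\ref{meager}, represent \xt\ as a standard markered array system with nested balanced markers arranged so that the set $N$ of arrays having a marker of infinite depth is meager and of universal measure zero. Passing to a fast-growing subsequence of marker levels if necessary, arrange that each $(k+1)$-rectangle has horizontal length much greater than $m_k\, n_k^{\max}$, where $m_k$ is the current number of available $k$-rectangles. Define a sequence of finitary maps $\phi_1,\phi_2,\dots$, where $\phi_k$ is the following \emph{level-$k$ reshaping}: within every $(k+1)$-rectangle of the input, designate the first $m_k$ $k$-sub-rectangles as an \emph{insertion zone} and the last $m_k$ as a \emph{displacement zone}; in the image, overwrite the insertion zone (in rows $1$ through $k$) with a fixed enumeration $R_k^1,\dots,R_k^{m_k}$ of all currently available $k$-rectangles, and bijectively relocate the original contents of the insertion zone to the displacement zone. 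Let $X^{(0)}=X$, $X^{(k)}=\phi_k(X^{(k-1)})$, $\phi=\lim_{k\to\infty}\phi_k\circ\cdots\circ\phi_1$, and $Y:=\overline{\phi(X\setminus N)}$.

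The map $\phi$ is a finitary isomorphism: for $x\in X\setminus N$, the value $\phi(x)_{k,n}$ is affected only by those $\phi_{k'}$ with $k'\ge k$ for which the position $n$ lies in an insertion or displacement zone at level $k'$. Because $x$ has no marker of infinite depth, this list is finite, so the infinite composition stabilizes at every position and depends continuously on a finite area of $x$. Equivariance is automatic, and injectivity follows because each $\phi_k$ is a bijection on the content inside each $(k+1)$-rectangle. As for minimality of $Y$: the structural property ``inside every $(k+1)$-rectangle, the first $m_k$ $k$-sub-rectangles form the fixed enumeration $R_k^1,\dots,R_k^{m_k}$'' passes to the closure, so every $k$-rectangle appearing anywhere in $Y$ also appears within every $(k+1)$-rectangle of every $y\in Y$. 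Since $(k+1)$-rectangles occur in $y$ with gaps $\le n_{k+1}^{\max}$, every corresponding cylinder is visited by every orbit, and since such cylinders (over all $k$) form a basis of the topology of $Y$, every orbit is dense.

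The main obstacle is the bookkeeping of the infinite chain of reshapings: a level-$(k+1)$ reshaping rewrites some positions previously set by $\phi_k$, and one must verify that the limit map is well defined (continuous and injective on $X\setminus N$) and carries the structural property at every level simultaneously. The fast growth of $n_k^{\min}$ arranged in the setup makes the insertion and displacement zones of consecutive levels fit disjointly inside the large middle regions of higher-level rectangles; the absence of markers of infinite depth makes the stabilization argument work at every position of $x\in X\setminus N$; and a final routine check verifies that $Y\setminus\phi(X\setminus N)$ has universal measure zero, so that $\phi$ qualifies as a finitary isomorphism in the sense of the preceding definition.
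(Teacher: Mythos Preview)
Your overall strategy---insert an enumeration of all $k$-rectangles into every $(k\!+\!1)$-rectangle, while storing the displaced data for invertibility---is indeed the paper's strategy. However, the concrete mechanism you describe has two genuine gaps.

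\textbf{Length matching and injectivity.} The $k$-rectangles have variable horizontal lengths (anywhere in $[n_k^{\min},n_k^{\max}]$), so the ``insertion zone'' (the first $m_k$ $k$-sub-rectangles of a given $(k\!+\!1)$-rectangle) has a length that depends on the array, whereas the fixed enumeration $R_k^1,\dots,R_k^{m_k}$ has one fixed length; in general these do not match, and you cannot simply ``overwrite'' one by the other without moving markers and creating gaps or overlaps. The same problem obstructs ``relocating'' the insertion-zone content to the displacement zone. Worse, even if the lengths matched, your relocation overwrites the original contents of the displacement zone, which are then lost; so $\phi_k$ is not a bijection on the content of the $(k\!+\!1)$-rectangle, and injectivity of $\phi$ fails. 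The paper avoids both problems by (i) storing the displaced data \emph{vertically} in pre-inserted empty rows (so nothing is overwritten), and (ii) using Frobenius numbers to pad the enumeration $\mathbf R_{2k-1}$ with extra $(2k\!-\!1)$-rectangles so as to exactly fill the freed horizontal slot of whatever length it has.

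\textbf{Stabilization and the null set.} Your claim that, for $x$ with no marker of infinite depth, every coordinate $n$ lies in only finitely many insertion/displacement zones is not justified. Absence of an infinite-depth marker implies that the distance from $n$ to the nearest $(k'\!+\!1)$-marker tends to infinity, but your zones have length $\sim m_{k'}n_{k'}^{\max}$, which also tends to infinity; there is no reason the former outpaces the latter. (Also, Lemma~\ref{meager} gives markers satisfying (1)--(2) only; whether they can simultaneously be balanced is posed as an open question.) The paper does not identify the bad set with $N$ at all: it places the modification in a \emph{single} rightmost $2k$-rectangle of each $(2k\!+\!1)$-rectangle, chooses $n_{2k+1}^{\min}/n_{2k}^{\max}>2^k$, and uses Borel--Cantelli to show that the set of points whose zero coordinate is affected infinitely often is null.
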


\begin{proof}
We give the proof for invertible systems only. The noninvertible case is more complicated (there are at least two ways of handling it, both require much effort to be explained in detail).

\smallskip
We begin with an arbitrary array representation of the given aperiodic zero-dimen\-sional system \xt\ (with no markers yet). Next we modify it by inserting plenty of empty rows, as follows: below the row 1 we insert one empty row, below rows 2 and 3 (which are now 3 and 4) we insert four empty rows, below rows 4 and 5 (which are now 9 and 10) we insert 10 empty rows, and so on. We keep the enumeration of the nonempty rows, while we label the empty rows as, for example, $1'$, $1'',2'',3'',4''$, $1''',\dots,10'''$, and so on (we are not going to refer to this labeling).

\smallskip
In order to determine the numbers $n_k$ (which later determine the numbers $n_k^{\min}$ and $n_k^{\max}$) we will use a XIX-th century concept of a Frobenius number. Let $n^{(1)},n^{(2)},\dots,n^{(m)}$ be some natural numbers and let $S$ be the generated semigroup. $S$ is called a \emph{numerical semigroup} if $\mathsf{gcd}(n^{(1)},\dots,n^{(m)})=1$. It is known that such a semigroup consists of all except finitely many natural numbers. The largest missing integer is called the \emph{Frobenius number}. We extend this notion to the case where the $\mathsf{gcd}$ is larger (say, it equals $g$). Then by the \emph{Frobenius number of $S$} we will mean the Frobenius number of $S/g$ multiplied by $g$.
\smallskip

We proceed inductively, in each step defining the numbers $n_{2k}$ and $n_{2k+1}$, distributing the $2k$-markers and $(2k\!+\!1)$-markers, and introducing some block manipulations. In this construction, we only care that the markers satisfy conditions (1) and (2) of Theorem \ref{markers}.
\smallskip

Step 0 is trivial: we choose $n_1$ arbitrarily and we distribute the $1$-markers arising from Theorem
\ref{kriegerg} in row number $1$.
\smallskip

Suppose we have completed step $k-1$, which includes the distribution and upward adjustment of the $(2k\!-\!1)$-markers in row $2k-1$. Let $n_{2k-1}^{(1)},\dots,n_{2k-1}^{(m_{2k-1})}$ be the resulting lengths of the $(2k\!-\!1)$-rectangles appearing in $X$ after this step, and let $s_{2k-1}$ denote the Frobenius number of the generated semigroup. Let $\mathbf R_{2k-1}$ be a concatenation of all $2k\!-\!1$-rectangles appearing in $X$ (each used exactly once, the order is inessential) and let $r_{2k-1} = |\mathbf R_{2k-1}|$ (the horizontal length). We now choose $n_{2k}$ so large that after the distribution and upward adjustment of the $2k$-markers (in row $2k$), the minimal distance $n_{2k}^{\min}$ is larger than $r_{2k-1}+s_{2k-1}$. Next we choose $n_{2k+1}$ so that after the distribution and upward adjustment of the $(2k\!+\!1)$-markers, we have $\frac{n_{2k+1}^{\min}}{n_{2k}^{\max}}>2^k$.
\smallskip

Now is the time for the block manipulation. In every $(2k\!+\!1)$-rectangle we find the rightmost $2k$-rectangle and contained in it concatenation of $(2k\!-\!1)$-rectangles (which stretch throughout all rows up to $2k-1$, including the added, initially empty rows, which at this moment are no longer empty). We ``move'' the contents of this concatenation (together with all the markers which appear there) to the so far empty rows lying between rows $2k-1$ and $2k$ (there are exactly as many empty rows as wee need for that). Next we use the free space so obtained and put there $\mathbf R_{2k-1}$ concatenated with a suitable collection of $2k\!-\!1$-rectangles selected to match the missing length
of the free space. This is possible due to the fact that the missing length is larger than the Frobenius number $s_{2k}$. We must consistently use the same concatenation for every space of the same length.\footnote{This is where the method fails in the noninvertible case. The space on which we perform this manipulation may be incomplete (cut at the horizontal zero coordinate) and then we do not know its complete length, hence we cannot decide which concatenation to choose.}
\smallskip

This completes the construction. We need to verify that we have defined a finitary isomorphism and that the image is minimal.
\smallskip

Notice that the ultimate image of an array is determined at every coordinate whenever every coordinate falls in the rightmost $2k$-rectangle of a $(2k\!+\!1)$-rectangle for only finitely many $k$'s. For any invariant measure the probability of this happening at the vertical coordinate zero is at most
$\frac{n_{2k}^{\max}}{n_{2k+1}^{\min}}<2^{-k}$. By summability and Borel--Cantelli Lemma, the probability of this happening at the coordinate zero for infinitely many $k$'s is zero. Thus the probability of this happening for infinitely many $k$'s at some coordinate is also zero. At every other point (array) in $X$, at every coordinate the code is determined by a finite area (hence continuous) algorithm. At such points the code is also invertible, because the markers allow us to locate the artificially added blocks, and replace them by the original blocks stored in the additional rows. So, we have indeed defined a finitary isomorphism.

\smallskip
Minimality of $\overline{Y'}$ is fairly obvious: all $(2k\!-\!1)$-rectangles have been ultimately created in step $k-1$. They are never cut or altered ever after. In step $k$ all of them are placed syndetically together with the repetitions of $\mathbf R_{2k-1}$ in every $(2k\!+\!1)$-rectangle, and because in later steps we never cut the latter rectangles, this property persists throughout all steps of the construction of $\phi$, and obviously passes to the elements in the closure of the image. This proves minimality.
\end{proof}

\section{Markers in presence of periodic points}

If the system contains a periodic orbit of minimal period $n$ then any set is visited by this orbit either with gaps not exceeding $n$ or never. So, there is no chance to have a marker set of any order
$\ge n$. In this case we have several choices, how to violate the standard distribution of markers.
We can either
\begin{itemize}
	\item allow the arrays representing $n$-periodic points to have no markers in rows
	$k$ with $n_k\ge n$, and points close to such, to have arbitrarily large gaps in these rows, or
	\item allow the arrays representing $n$-periodic points (or close to such) to have markers with gaps
	equal to $n$ in rows $k$ for which $n_k\ge n$ (i.e., too short).
\end{itemize}
The first approach can still serve for entropy estimations and proved useful in many other constructions. The proof of the statement below is identical as that of Theorem \ref{krieger}. The details are left to the reader. The second approach will be exploited in the next section to the creation of Bratteli--Vershik models.
\smallskip

We denote by $\Per, \Per_n, \Per_{[1,n]}$ the sets of all periodic points, the ones with minimal period $n$
and with minimal period at most $n$, respectively.

\begin{thm}[Krieger's Marker Lemma, see \cite{Bo83}]\label{kriegerg}
If \xt\ is a zero-dimensional system then for every $\epsilon>0$ and every natural $n$ there exists a clopen $n$-marker set $F$ such that, for some natural $N$,
\begin{enumerate}
	\item $F$ is $n$-separated,
	\item $F\cup T^{-1}F\cup \dots \cup T^{-N}F\supset X\setminus (\Per_{[1,n+1]})^\epsilon$,
\end{enumerate}
where $A^\epsilon$ denotes the $\epsilon$-neighborhood of a set $A$.
\end{thm}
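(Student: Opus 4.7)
The plan is to mimic the proof of Theorem~\ref{krieger} almost verbatim, restricting the initial covering stage to a compact subset of $X$ that avoids $(\Per_{[1,n+1]})^{\epsilon}$. Set
$$
K := X\setminus(\Per_{[1,n+1]})^{\epsilon};
$$
since $\Per_{[1,n+1]} = \bigcup_{k=1}^{n+1}\{x:T^kx=x\}$ is closed, $K$ is compact.

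First I would build a finite cover $\U = \{U_1,\dots,U_m\}$ of the compact set $T^{n-1}(K)$ by clopen $n$-separated sets. For each $y = T^{n-1}x \in T^{n-1}(K)$, invertibility of $T$ yields $T^ky = y \iff T^kx = x$; since $x\notin \Per_{[1,n+1]}$, we have $T^ky\ne y$ for $k=1,\dots,n+1$, so by continuity and zero-dimensionality $y$ admits a clopen $n$-separated neighborhood. Compactness yields the finite subcover; as in the original proof one may additionally replace each $U_j$ by $T^{-nm}(U_j)$ to keep the relevant forward iterates clopen.

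Then define $F_1 := U_1$ and
$$
F_{j+1} := F_j \cup \Bigl(U_{j+1}\setminus \bigcup_{-n<i<n} T^{-i}(F_j)\Bigr),\qquad F := F_m.
$$
The inductive verification from Theorem~\ref{krieger} that each $F_j$ is $n$-separated uses only the $n$-separation of the individual $U_j$'s and transfers verbatim. For the coverage condition~(2), given $x\in K$ the point $T^{n-1}x \in T^{n-1}(K)$ lies in some $U_{j_0}$, and the same case analysis as in the aperiodic proof yields $T^{n-1+i}x\in F$ for some $i$ with $-n<i<n$; hence $x\in T^{-l}F$ for some $l\in\{0,1,\dots,2n-2\}$, so taking $N := 2n-2$ does the job.

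The only genuinely new point compared with Theorem~\ref{krieger} is ensuring that each $y\in T^{n-1}(K)$ admits a clopen $n$-separated neighborhood; this is automatic for $\z$-actions by the equivalence above, while in the $\na_0$ setting the extra slack in the buffer — excluding $(\Per_{[1,n+1]})^{\epsilon}$ rather than $(\Per_{[1,n-1]})^{\epsilon}$ — is exactly what allows one to push the separation property from $K$ forward by $n-1$ steps via uniform continuity of the iterates $T,T^2,\dots,T^{n-1}$. This transfer is the only step where the periodic case requires more care than the aperiodic one.
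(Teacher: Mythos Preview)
Your approach is exactly what the paper intends: it says explicitly that the proof is ``identical'' to that of Theorem~\ref{krieger} and leaves the details to the reader, and restricting the initial covering stage to the compact set $K=X\setminus(\Per_{[1,n+1]})^\epsilon$ followed by the same inductive construction is the natural way to fill those in. For the $\z$-action your argument is correct (and covering $T^{n-1}(K)$ rather than $K$ itself makes no real difference there, since a homeomorphism preserves the periodic set).

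There is, however, a genuine gap in your treatment of the $\na_0$ case. The claim that every $y\in T^{n-1}(K)$ admits a clopen $n$-separated neighborhood is false in general: the fact that $x$ lies far from $\Per_{[1,n+1]}$ does not prevent $T^{n-1}x$ from being periodic of small period when $T$ is not invertible. For a two-point counterexample take $X=\{a,b\}$ with $Ta=Tb=a$ and $n=2$. Then $b$ is aperiodic and (for small~$\epsilon$) belongs to $K$, yet $T^{n-1}b=Tb=a$ is a fixed point and has no $2$-separated neighborhood at all. Your appeal to ``uniform continuity of the iterates $T,T^2,\dots,T^{n-1}$'' cannot repair this: uniform continuity controls how distances propagate forward, but it cannot stop a non-periodic point from landing on a periodic one. (Note that the theorem itself is still true in this example --- $F=\{b\}$ works --- it is only your construction that breaks down.) For the non-invertible setting one has to organize the covering and the negative-$i$ terms in the subtraction differently, and your sketch does not indicate how.
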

In other words, points in the neighborhood of periodic orbits with small periods may have markers appearing with unbounded or even infinite gaps (i.e., going forward or backward the markers may be missing).

\section{Bratteli--Vershik models of zero-dimensional systems}
Unlike previous sections, this one, in addition to being a survey of well-known notions and facts, leads also to a new result concerning specific representations of zero-dimensional systems.
\smallskip

We now recall briefly the notion of a Bratteli--Vershik representation of a zero-dimensional system. For more details see \cite{HPS92} or the surveys~\cite{durand:2010,bezuglyi_karpel,P10,S00}, where different aspects of Bratteli diagrams are studied.
\smallskip

A Bratteli diagram is a graph $B=(V,E)$ whose set of vertices $V$ is organized into countably many disjoint finite subsets $V_0$, $V_1$, \dots\ called {\it levels}. The zero level $V_0$ is a singleton $\{v_0\}$. The set of edges $E$ of the diagram is organized into countably many disjoint finite sets $E_1, E_2,\dots$. Every edge $e\in E_k$ connects a \emph{source} $s=s(e)\in V_k$ with some \emph{target} $t=t(e)\in V_{k-1}$. Each vertex is a target of at least one edge and every vertex of each level $k>0$ is also a source of at least one edge. Multiple edges connecting the same pair of vertices are admitted. By a \emph{path} we will understand a finite (or infinite) \sq\ of edges $p=(e_1,e_2,\dots,e_l)$ (or $p=(e_1,e_2,\dots)$) such that $t(e_{k+1})=s(e_k)$ for every $k=1,\dots,l-1$ (or $k=1,2,\dots$). Then the target of $e_1$ will be referred to as the target of the path and (only for finite paths) the source of $e_l$ will be referred to as the source of the path.


\begin{defn}
Given a Bratteli diagram $B$, we define the \emph{path space} $X_B$ as the set of all infinite paths with target $v_0$. We endow $X_B$ with the topology inherited from the product space $\prod_k E_k$, where each $E_k$ is considered discrete. Clearly, $X_B$ is compact, metric and zero-dimensional.
\end{defn}

For instance, the distance between two different paths $p = (e_1, e_2, \ldots)$ and $p' = (e'_1, e'_2, \ldots)$ in $X_B$ can be defined as $\frac{1}{2^N}$ where $N$ is the smallest integer such that $e_N \neq e'_N$. 

\emph{Telescoping} is a transformation of the Bratteli diagram $B$ into $B'=(V',E')$ consisting in choosing an infinite sub\sq\ $V_{k_i}$ of the levels, starting with $V_{k_0}=V_0$, and claiming them the levels $V'_i$ of the new diagram, and declaring the paths with targets in $V_{k_i}$ and sources in $V_{k_{i+1}}$
to be the edges of the new diagram between the levels $V'_i$ and $V'_{i+1}$.

The following fact is obvious and we omit the proof.
\begin{fact}
If $B'$ is obtained from $B$ by telescoping then $X_{B'}$ and $X_B$ are homeomorphic.
\end{fact}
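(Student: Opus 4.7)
The plan is to exhibit an explicit bijection $\phi:X_B\to X_{B'}$ and show that it is a homeomorphism by a standard compact-Hausdorff argument.

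First I would construct $\phi$ by the natural ``grouping'' operation. Given $p=(e_1,e_2,\dots)\in X_B$, the levels $V_{k_0}=V_0, V_{k_1}, V_{k_2}, \dots$ chop the path $p$ into consecutive finite segments $p_i=(e_{k_{i-1}+1},\dots,e_{k_i})$ for $i\ge 1$, where $p_i$ is a path in $B$ with target in $V_{k_{i-1}}=V'_{i-1}$ and source in $V_{k_i}=V'_i$. By the very definition of telescoping, each such $p_i$ is precisely one edge $e'_i\in E'_i$ of $B'$, with $t(e'_i)=t(e_{k_{i-1}+1})$ and $s(e'_i)=s(e_{k_i})$. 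Moreover $t(e'_{i+1})=s(e'_i)$, because the corresponding equality holds for $e_{k_i+1}$ and $e_{k_i}$ inside $p$. Hence $\phi(p):=(e'_1,e'_2,\dots)$ is a well-defined element of $X_{B'}$.

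Next, the inverse is obtained by ``unpacking'' each edge $e'_i$ of $B'$ back into the unique finite path $(e_{k_{i-1}+1},\dots,e_{k_i})$ in $B$ that it represents, and concatenating. Two elements of $X_B$ with the same grouped sequence must agree edge by edge, so $\phi$ is a bijection.

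For continuity of $\phi$: the $i$-th coordinate $e'_i$ of $\phi(p)$ is determined entirely by the finite block $(e_{k_{i-1}+1},\dots,e_{k_i})$ of $p$, so $\phi$ factors through a continuous coordinate projection into $E'_i$; since this holds for each $i$, $\phi$ is continuous into the product $\prod_i E'_i$ and therefore into $X_{B'}$. Symmetrically, each $e_k$ is a coordinate of the (unique) finite path represented by $e'_i$ where $k_{i-1}<k\le k_i$, so the inverse is continuous by the same argument. Alternatively, since $X_B$ is compact and $X_{B'}$ is Hausdorff, the continuous bijection $\phi$ is automatically a homeomorphism, which finishes the proof.

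There is no real obstacle here; the only thing to be careful about is the bookkeeping between the original edges $e_k\in E_k$ and the ``compound'' edges $e'_i\in E'_i$, and the observation that both path spaces sit inside products of discrete finite sets, so continuity reduces to checking that finitely many old coordinates determine each new one and vice versa.
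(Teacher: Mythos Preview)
Your argument is correct and is precisely the natural one: group the edges of $p$ along the selected levels to get a bijection, then observe that each coordinate on either side depends on only finitely many coordinates on the other (or invoke the compact--Hausdorff shortcut). The paper itself declares this fact obvious and omits the proof, so there is nothing to compare against; your write-up is exactly the verification one would expect.
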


\begin{defn}
We will say that the diagram $B$ is \emph{simple}, if there is a telescoping leading to a diagram in which for every pair of vertices $t\in V'_i, s\in V'_{i+1}$, there exists at least one edge $e'\in E'$ with $s(e')=s$ and $t(e')=t$.
\end{defn}

\begin{defn}
By an \emph{ordered Bratteli diagram} $(B,<)$ we shall mean a Bratteli diagram $B$ with a specific partial order. For each vertex $v\in V_k$, where $k>0$, all edges $e$ with $s(e)=v$ are ordered linearly
(i.e., enumerated as $\{e^{(1)}e^{(2)},\dots,e^{(n(v))}\}$). Edges with different sources are incomparable.
\end{defn}

The above order allows to introduce a partial order among finite and infinite paths. Two finite paths are comparable if they have a common source and the same length. For such paths we can apply the inverse lexicographical order: a path $p=(e_1,e_2,\dots,e_l)$ precedes $p'=(e'_1,e'_2,\dots,e'_l)$ if there exists
an index $1\le i\le l$ such that $e_j = e'_j$ for all $j>i$ (then $s(e_i)=s(e'_i)$) and $e_i<e'_i$ (we admit $i=l$; then the first condition is fulfilled trivially). Two infinite paths are comparable if they have targets in the same level and they are \emph{cofinal}, i.e., they agree from some place downward. In such case we apply to them the same rule as described above.

\begin{defn} A finite or infinite path is called \emph{maximal} (\emph{minimal}) if it has no successor (predecessor). By compactness, one can show that at least one maximal and one minimal path in $X_B$ always exists. A Bratteli diagram is \emph{properly ordered} if $X_B$ contains a unique maximal and unique minimal path.
\end{defn}

Denote by $X_{\min}$ the set of all minimal paths of an ordered Bratteli diagram and by $X_{\max}$ the set of all maximal paths.

\begin{defn} On the path space $X_B$ of an ordered Bratteli--Vershik diagram $(B,<)$, there is a natural, partially defined transformation $T_V$, called the \emph{Vershik map}. It is defined on the set of all but maximal paths and it sends every such path to its successor. The range of the map is the set of all but minimal paths. The Vershik map is a homeomorphism between its domain and range.
\end{defn}

The main area of applicability of Bratteli--Vershik representations are minimal Cantor systems, which is due to the following theorem proved in \cite{HPS92}.

\begin{thm}
Suppose $(B,<)$ is a properly ordered simple Bratteli diagram. Then the Vershik map $T_V$ prolongs to a self-homeomorphism $\bar T_V$ of $X_B$, by sending the unique maximal path to the unique minimal path. The resulting \ds\ $(X_B,\bar T_V)$ is zero-dimensional and minimal (hence $X_B$ is either finite or homeomorphic to the Cantor set). Conversely, every minimal homeomorphism of a zero-dimensional compact metric space is \tl ly conjugate to the system $(X_B,\bar T_V)$ for some properly ordered simple Bratteli diagram $(B,<)$.
\end{thm}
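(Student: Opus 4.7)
\medskip

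\noindent\textbf{Proof plan.} The statement has two parts, and I would handle them in order.

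For the forward direction, the plan is first to check that the prescribed extension $\bar T_V(x_{\max})=x_{\min}$ is a well-defined homeomorphism, and second to check minimality. Recall that $T_V$ acts on a non-maximal path $p=(e_1,e_2,\dots)$ by locating the smallest $i$ for which $e_i$ is not maximal, replacing $e_i$ by its successor $e'_i$, and then replacing $e_1,\dots,e_{i-1}$ by the unique minimal chain below $s(e'_i)$. Take a sequence $y^{(m)}\to x_{\max}$ of non-maximal paths in $X_B$. For each $N$, eventually $y^{(m)}$ agrees with $x_{\max}$ on the first $N$ edges, so the first non-maximal edge of $y^{(m)}$ sits at some level $>N$. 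Hence the first $N$ edges of $T_V(y^{(m)})$ are the minimal edges below some vertex at a level $>N$. Now comes the key point: by properness of the order, $X_B$ has a \emph{unique} minimal path $x_{\min}$; so any initial chain of minimal edges of length $N$ must coincide with the first $N$ edges of $x_{\min}$ — because otherwise, by taking any extension of such a chain downwards using only minimal edges at each vertex and applying compactness, we would produce a second minimal path, contradicting uniqueness. Thus $T_V(y^{(m)})$ converges to $x_{\min}$, which gives continuity of $\bar T_V$ at $x_{\max}$. The symmetric argument (played backwards) shows continuity of $\bar T_V^{-1}$ at $x_{\min}$. Together with the already continuous action elsewhere this gives a self-homeomorphism.

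For minimality I would exploit simplicity together with the standard Kakutani--Rokhlin-type clopen partition induced by the diagram. Telescope $B$ so that every ordered pair of consecutive-level vertices is connected by at least one edge. For each level $k$, the cylinder sets $[p]$ indexed by finite paths $p$ from $V_0$ to $V_k$ form a clopen partition $\Q_k$ of $X_B$ refining as $k\to\infty$ to the topology. Fix $k$ and a target vertex $v\in V_k$; the union of cylinders ending at $v$ is a clopen ``tower'' which, under iteration of $\bar T_V$, is cyclically permuted through all cylinders $[p]$ ending at $v$ in the order inherited from $<$. Because the diagram is telescoped and simple, every vertex $w\in V_{k+1}$ is reached from $v$, so every orbit of $\bar T_V$ enters this tower, and in fact every cylinder $[p]$. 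Intersecting over all $k$ yields density of every orbit, i.e.\ minimality.

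For the converse, I would use the now-classical Kakutani--Rokhlin tower construction. Starting from a minimal homeomorphism \xt\ on a zero-dimensional compact metric space, pick any point $x_0\in X$ and a nested basis of clopen neighborhoods $X=Y_0\supset Y_1\supset Y_2\supset\cdots$ of $x_0$ with $\bigcap_k Y_k=\{x_0\}$, and such that $Y_{k+1}\subset Y_k$ is chosen so that the first return time to $Y_{k+1}$ refines that to $Y_k$. By minimality of $T$ the first-return map to $Y_k$ is well defined and takes finitely many clopen values, so $X$ decomposes as a finite clopen Kakutani--Rokhlin partition $\p_k=\{T^iB_{k,j}:j\in J_k,\,0\le i<h_{k,j}\}$, with $Y_k=\bigsqcup_j B_{k,j}$. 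Define the Bratteli diagram $B$ by letting $V_k=J_k$ and placing one edge from $j\in V_{k+1}$ to $j'\in V_k$ for every level of the $j'$-column of $\p_k$ that is contained inside the $j$-column of $\p_{k+1}$; order these edges according to the order in which the $T$-orbit inside the $j$-column visits them from bottom to top. The map $\Psi:X\to X_B$ assigns to $x$ its sequence of columns in $\p_1,\p_2,\dots$; refinement of $\p_k$ to the topology makes $\Psi$ a continuous injection, and compactness makes it a homeomorphism onto $X_B$. By construction, going up one step inside the $Y_k$-column is precisely the successor operation in $(B,<)$, so $\Psi$ conjugates $T$ to $\bar T_V$. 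Simplicity comes from minimality (any column at level $k+n$ crosses every column at level $k$ for $n$ large), and one more careful choice during the construction — selecting the successive $Y_k$ around $x_0$ and around a companion point $y_0$ on the orbit-preceding side — guarantees uniqueness of the maximal and minimal paths, giving properness.

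The step I expect to be the main obstacle is the converse direction, and specifically arranging simultaneously (a) that the Kakutani--Rokhlin partitions $\p_k$ refine nicely so the transition edges between levels can be read off unambiguously, and (b) that exactly one maximal and one minimal path arise. Point (a) is a matter of choosing each $Y_{k+1}$ inside a single atom of $\p_k$ sitting at the base of a column, which is where one must be slightly careful to preserve the nesting of columns. Point (b) requires pinning down $x_0$ to be the unique image of the minimal path and choosing the bases $B_{k,j}$ so that the ``last'' level of each column is the one containing $T^{-1}x_0$; this is the delicate orientation choice that in the original \cite{HPS92} occupies the bulk of the technical work. Everything else — continuity of $\Psi$, surjectivity via compactness, and the conjugacy equation $\Psi\circ T=\bar T_V\circ\Psi$ off the exceptional fibre $\{x_0,T^{-1}x_0\}$ — reduces to bookkeeping with the tower structure.
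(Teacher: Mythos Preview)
The paper does not supply its own proof of this theorem; it is quoted as a result of Herman, Putnam and Skau and attributed to \cite{HPS92} without argument. Your proposal is a faithful outline of the standard proof from that reference: the continuity of $\bar T_V$ at $x_{\max}$ via uniqueness of the minimal path, minimality from simplicity through the Kakutani--Rokhlin tower picture, and the converse via a nested sequence of Kakutani--Rokhlin partitions shrinking to a base point. The identification of properness (unique maximal and minimal paths) as the delicate step in the converse is accurate, and your suggested mechanism---choosing the clopen bases $Y_k$ so that $\bigcap_k Y_k=\{x_0\}$ and the top levels accumulate on $T^{-1}x_0$---is exactly how it is done in \cite{HPS92}. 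One small point to tighten in the forward direction: your sentence ``any initial chain of minimal edges of length $N$ must coincide with the first $N$ edges of $x_{\min}$'' is correct, but the justification should state explicitly that an infinite path all of whose edges are minimal is a minimal element of $X_B$ (it has no predecessor), so that the downward extension by minimal edges does produce a genuine second minimal path if the initial segment differs from that of $x_{\min}$.
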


In this survey, we do not want to restrict to minimal systems and thus we will consider more general Bratteli--Vershik models. We will admit multiple maximal and minimal paths and we will not require the diagram to be simple.
Below we present a few examples illustrating zero-dimensional dynamical systems and their Bratteli--Vershik representations.

\begin{exam}[Vershik homeomorphism on a non-simple properly ordered Bratteli diagram]\label{1}
In the following example (see the picture below), the diagram is stationary, i.e. for every $n \in \mathbb{N}$ the edges between levels $n+1$ and $n$ are drawn and numbered in the same way as the edges between levels 2 and 1. There is one maximal path and one minimal path and these two paths are equal: it is the path passing through the vertices $\{v_i\}_{i = 1}^\infty$. The Vershik map prolongs to a self-homeomorphism of $X_B$ by sending the unique maximal path to the unique minimal path, i.e. the map has one fixed point. The set $X_B$ has infinitely countably many isolated points and one accumulation point (the fixpoint).


\begin{figure}[ht]
\unitlength = 0.4cm
\begin{center}
\begin{graph}(6,10)
\graphnodesize{0.4}
\roundnode{V0}(3,9)
\nodetext{V0}(0.5,0.5){$v_0$}
\roundnode{V11}(1,6)
\nodetext{V11}(-0.8,0){$v_1$}
\roundnode{V12}(5,6)
\nodetext{V12}(0.9,0){$w_1$}

\edge{V11}{V0}
\edgetext{V11}{V0}{0}
\edge{V12}{V0}
\edgetext{V12}{V0}{0}

\roundnode{V21}(1,3)
\nodetext{V21}(-0.8,0){$v_2$}
\roundnode{V22}(5,3)
\nodetext{V22}(0.9,0){$w_2$}

\edge{V21}{V11}
\freetext(0.5,4.5){0}
\bow{V22}{V11}{-0.09}
\bow{V22}{V11}{0.09}
\bowtext{V22}{V11}{-0.09}{0}
\bowtext{V22}{V11}{0.09}{2}
\edge{V22}{V12}
\freetext(5.5,4.5){1}

\roundnode{V31}(1,0)
\nodetext{V31}(-0.8,0){$v_3$}
\roundnode{V32}(5,0)
\nodetext{V32}(0.9,0){$w_3$}

\edge{V31}{V21}
\freetext(0.5,1.5){0}
\bow{V32}{V21}{-0.09}
\bow{V32}{V21}{0.09}
\bowtext{V32}{V21}{-0.09}{0}
\bowtext{V32}{V21}{0.09}{2}
\edge{V32}{V22}
\freetext(5.5,1.5){1}

\freetext(1,-1){$\vdots$}
\freetext(5,-1){$\vdots$}

\end{graph}
\end{center}
\end{figure}

\medskip

\medskip
The corresponding zero-dimensional system can be also described as the ``sunny side up'' subshift generated by the \sq\ \ $\dots\!0001000\!\dots$. It is easy to show explicitly the conjugacy between the systems. Indeed, the vertical path which passes through the vertices $\{v_i\}_{i = 1}^\infty$ corresponds to the accumulation point $\dots\!000000\!\dots$. The vertical path which passes through the vertices $\{w_i\}_{i = 1}^\infty$ corresponds to the point  $\dots\!000100\!\dots$, where the 1 is positioned at the coordinate zero (this point lies in the biggest distance from the accumulation point). The path which passes through the edge labeled $0$ between $w_{n+1}$ and $v_n$ corresponds to the point in the subshift which has $1$ at the position $- n$. And the path which passes through the edge labeled $2$ between $w_{n+1}$ and $v_n$ corresponds to the point which has $1$ at the position $n$.
\end{exam}

\begin{exam}[Vershik homeomorphism on an ordered non-simple Bratteli diagram with two paths which are both maximal and minimal]\label{2} If in Example~\ref{1} there were two edges between the vertices $v_1$ and $v_0$ then the diagram would remain properly ordered and would model a topologically transitive dynamical system with a limit cycle consisting of 2 points. The following example shows another ordered Bratteli diagram which models such a dynamical system. The diagram below is stationary after telescoping with respect to even levels. It has two maximal paths which are at the same time minimal (these are the vertical paths on the left and on the right side of the diagram). The Vershik map prolongs uniquely turning these paths to a periodic orbit of period 2. 
The corresponding dynamical system can be modeled as the orbit closure of the point \ $\dots000111\dots$, but under a map which is not the standard shift. The map is the composition of the flip (which exchanges zeros and ones) and the shift. Another description of the system would be a subshift generated by the sequence $\dots010110101\dots$ (a sequence which has one ``defect'' preventing it from being a periodic sequence of period 2). We leave the description of the conjugacy as an exercise to the reader. 

\begin{figure}[ht]
\begin{center}
\unitlength = 0.4cm

\begin{graph}(10,12)
\graphnodesize{0.4}
\roundnode{V0}(5,12)

\roundnode{V11}(1,9)
\roundnode{V12}(5,9)
\roundnode{V13}(9,9)

\edge{V11}{V0}
\edgetext{V11}{V0}{$0$}
\edge{V12}{V0}
\edgetext{V12}{V0}{$0$}
\edge{V13}{V0}
\edgetext{V13}{V0}{$0$}

\roundnode{V21}(1,6)
\roundnode{V22}(5,6)
\roundnode{V23}(9,6)

\edge{V21}{V11}
\edgetext{V11}{V21}{$0$}
\edge{V22}{V11}
\edge{V22}{V12}
\edge{V22}{V13}
\edge{V23}{V13}
\edgetext{V13}{V23}{$0$}
\edgetext{V22}{V11}{0}
\edgetext{V22}{V13}{2}
\bowtext{V22}{V12}{0.12}{1}

\roundnode{V31}(1,3)
\roundnode{V32}(5,3)
\roundnode{V33}(9,3)
\edge{V31}{V21}
\edgetext{V31}{V21}{$0$}
\edge{V32}{V21}
\edge{V32}{V22}
\edge{V32}{V23}
\edge{V33}{V23}
\edgetext{V33}{V23}{$0$}
\edgetext{V32}{V21}{2}
\edgetext{V32}{V23}{0}
\bowtext{V32}{V22}{0.12}{1}


\roundnode{V41}(1,0)
\roundnode{V42}(5,0)
\roundnode{V43}(9,0)

\edge{V41}{V31}
\edgetext{V41}{V31}{$0$}
\edge{V42}{V31}
\edge{V42}{V32}
\edge{V42}{V33}
\edgetext{V42}{V31}{0}
\edgetext{V42}{V33}{2}
\edge{V43}{V33}
\edgetext{V43}{V33}{$0$}
\bowtext{V42}{V32}{0.12}{1}

\freetext(1,-1){$\vdots$}
\freetext(5,-1){$\vdots$}
\freetext(9,-1){$\vdots$}

\end{graph}
\vspace{0.7cm}
\end{center}
\end{figure}
\end{exam}

In this survey, we aim to study Bratteli diagrams such that
the Vershik map \emph{determines} a homeomorphism of $X_B$, as in the following definition:

\begin{defn}\label{decisiveset}
We say that an ordered Bratteli diagram $(B,<)$ is \emph{decisive} if the Vershik map prolongs in a unique way to a homeomorphism $\bar T_V$ of $X_B$. A zero-dimensional \ds\ \xt\ will be called \emph{Bratteli--Vershikizable} if it is conjugate to $(X_B,\bar T_V)$ for a decisive ordered Bratteli diagram $B$.
\end{defn}

For example, every minimal Cantor system is Bratteli--Vershikizable.
One of the easiest examples of a decisive Bratteli diagram is a dyadic odometer, i.e. the diagram with a single vertex $v_n$ on each level $n$ and two edges joining $v_{n+1}$ and $v_n$ for all $n$. Then the Vershik homeomorphism can be described as the dyadic adding machine acting on the set $\{0,1\}^{\mathbb{N}}$. Notice that the diagrams in  Examples~\ref{1},~\ref{2} are also decisive.

The following lemma holds:

\begin{lem}\label{decisiveness}
An ordered Bratteli diagram is decisive if and only if two conditions hold:

\begin{enumerate}
	\item the Vershik map and its inverse are uniformly continuous on their domains, and
	\item the set of maximal paths and the set of minimal paths either both have empty interiors, or both
	their interiors consist of just one isolated point.
\end{enumerate}
\end{lem}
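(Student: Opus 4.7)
The plan is to handle each direction separately, with the backward direction split into the two subcases given by (2).

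For the forward direction, assume $\bar T_V$ is the unique extension of $T_V$ to a homeomorphism of $X_B$. Since $X_B$ is compact, $\bar T_V$ and $\bar T_V^{-1}$ are automatically uniformly continuous; restricting them to $X_B \setminus X_{\max}$ and $X_B \setminus X_{\min}$ respectively yields (1). For (2), note that $T_V$ bijects $X_B \setminus X_{\max}$ with $X_B \setminus X_{\min}$, so by bijectivity of $\bar T_V$ we have $\bar T_V(X_{\max}) = X_{\min}$, and $\bar T_V$ takes $\mathrm{int}(X_{\max})$ onto $\mathrm{int}(X_{\min})$. To show the interior cannot be large, suppose $\mathrm{int}(X_{\max})$ contained two distinct points; by zero-dimensionality of $X_B$ it would then contain a clopen subset $V$ of $X_B$ with at least two points, and $\bar T_V(V)$ would likewise be clopen with at least two points. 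Replacing $\bar T_V|_V$ by any nontrivial homeomorphism $V \to \bar T_V(V)$ (for instance, one that swaps the images of two points) produces a different continuous map: clopenness of $V$ guarantees that modifications on $V$ preserve continuity on all of $X_B$, and $V \subseteq X_{\max}$ ensures that $T_V$ itself is not disturbed. This contradicts uniqueness, so $\mathrm{int}(X_{\max})$ is either empty or a single clopen singleton, i.e., an isolated point.

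For the backward direction, assume (1) and (2). In the subcase where both interiors are empty, $X_B \setminus X_{\max}$ is dense in $X_B$, so the uniformly continuous $T_V$ has a unique continuous extension $\bar T_V$ to $X_B$, and analogously for $T_V^{-1}$; the two compositions agree with the identity on a dense set and hence everywhere, making $\bar T_V$ a homeomorphism, uniquely determined by density.

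In the subcase $\mathrm{int}(X_{\max}) = \{p\}$ and $\mathrm{int}(X_{\min}) = \{q\}$ (both isolated), the set $X_{\max} \setminus \{p\}$ still has empty interior in $X_B$, so $X_B \setminus X_{\max}$ is dense in the compact subspace $X_B \setminus \{p\}$ and $T_V$ extends uniquely to a continuous map $\tilde T_V: X_B \setminus \{p\} \to X_B$. The crucial step is to verify $\tilde T_V(X_B \setminus \{p\}) = X_B \setminus \{q\}$: if $x_n \to x \in X_{\max} \setminus \{p\}$ with $x_n$ not maximal and $y = \lim T_V(x_n) \notin X_{\min}$, then $y$ lies in the open set $X_B \setminus X_{\min}$ where $T_V^{-1}$ is continuous, so $x_n = T_V^{-1}(T_V(x_n)) \to T_V^{-1}(y) \notin X_{\max}$, contradicting $x \in X_{\max}$; isolation of $q$ further prevents $y = q$. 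The reverse inclusion follows because $\tilde T_V(X_B \setminus \{p\})$ is closed (compact input) and contains $X_B \setminus X_{\min}$, whose closure is $X_B \setminus \{q\}$. Setting $\bar T_V(p) := q$ is then forced by bijectivity, continuity at the isolated $p$ is automatic, and treating $T_V^{-1}$ analogously shows $\bar T_V$ is a homeomorphism. Any other continuous extension must agree with $\tilde T_V$ on the dense set $X_B \setminus \{p\}$ and send $p$ to $q$ by bijectivity, so the extension is unique.

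The main obstacle is the image identification $\tilde T_V(X_B \setminus \{p\}) = X_B \setminus \{q\}$ in the singleton subcase: it uses the full force of condition (1), since one needs continuity of $T_V^{-1}$ (not merely of $T_V$) to rule out limits of $T_V(x_n)$ escaping $X_{\min}$. The forward construction of a distinct extension via a clopen swap is combinatorially light but leans essentially on zero-dimensionality to manufacture clopen subsets of the interior.
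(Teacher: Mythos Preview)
Your proof is correct and follows essentially the same strategy as the paper: extend by uniform continuity to the closure of the domain (handling the isolated-point case by the forced assignment $p\mapsto q$), and obtain non-uniqueness from a large interior by modifying the extension on a clopen piece. Your argument is in fact more detailed than the paper's sketch---you explicitly verify condition~(1) in the forward direction and the image identification $\tilde T_V(X_B\setminus\{p\})=X_B\setminus\{q\}$, both of which the paper leaves implicit---and your forward direction is organized more cleanly by working directly from the assumed homeomorphism $\bar T_V$ rather than splitting into cases according to whether $\mathrm{int}(X_{\max})$ and $\mathrm{int}(X_{\min})$ are homeomorphic.
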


\begin{proof}
If the Vershik map $T_V$ and its inverse are uniformly continuous, then $T_V$ can be uniquely prolonged to a homeomorphism $T$ between the closure of its domain and the closure of its range, which in one case are both equal to the whole space, and, in the other case, the domain misses one isolated maximal path $p_0$ and the range misses one isolated minimal path $p_1$. Then we can prolong $T_V$ to the whole space $X_B$
in a unique way, 
by sending $p_0$ to $p_1$.

On the other hand, if the domain of $T_V$ misses a larger open set $U$, and the inverse map misses a set $U'$ homeomorphic to $U$, then $U$ and $U'$ contain a pair of homeomorphic clopen sets, at least two points each, and the map $T_V$ can be prolonged to a homeomorphism in more than one way. Finally, if $U$ and $U'$ are not homeomorphic, then $T_V$ cannot be prolonged to a homeomorphism of $X_B$ at all.
\end{proof}

To better illustrate the notion, we give two examples, where decisiveness fails for two different reasons.

\begin{exam}[A non-decisive ordered Bratteli diagram: the Vershik map is not uniformly continuous. Such a diagram is not a model of any \ds\footnote{Where \ds s are continuous maps on compact spaces.}] The diagram below is stationary. It has two minimal and two maximal paths. The minimal paths are vertical, one of them passes through the vertices $\{v_n\}_{n=1}^\infty$ and the other passes through the vertices $\{w_n\}_{n=1}^\infty$. One of the maximal paths (denote it by $x$) passes through the vertices $\{v_{2n-1}, w_{2n}\}_{n = 1}^\infty$ and the other through $\{w_{2n-1}, v_{2n}\}_{n = 1}^\infty$. The sets $X_{\min}$ and $X_{\max}$ are homeomorphic, but the Vershik map cannot be prolonged to the whole set $X_B$. Indeed, consider an infinite path $y(N)$ which passes through the vertices $\{v_{2n-1}, w_{2n}\}_{n = 1}^N$ for some $N$ and through the vertices $v_{2N + 1}, v_{2N}$. Such a path is non-maximal and it's image under the Vershik map passes through vertices $\{w_n\}_{n = 1}^{2N + 1}$. Consider also an infinite path $z(N)$ which passes through the vertices $\{v_{2n-1}, w_{2n}\}_{n = 1}^N$ and through the vertex $w_{2N + 1}$. Such a path is also non-maximal and it's image under the Vershik map passes through vertices $\{v_n\}_{n = 1}^{2N}$. As $N$ grows to infinity, both $y(N)$ and $z(N)$ tend to $x$. 
Still the images of $y(N)$ and $z(N)$ under the Vershik map are far apart regardless of $N$ (they differ in the first level). Hence, the Vershik map is not uniformly continuous.

\begin{figure}[ht]
\unitlength = 0.4cm
\begin{center}
\begin{graph}(6,10)
\graphnodesize{0.4}
\roundnode{V0}(3,9)
\nodetext{V0}(0.5,0.5){$v_0$}
\roundnode{V11}(1,6)
\nodetext{V11}(-0.8,0){$v_1$}
\roundnode{V12}(5,6)
\nodetext{V12}(0.9,0){$w_1$}

\edge{V11}{V0}
\edgetext{V11}{V0}{0}
\edge{V12}{V0}
\edgetext{V12}{V0}{0}

\roundnode{V21}(1,3)
\nodetext{V21}(-0.8,0){$v_2$}
\roundnode{V22}(5,3)
\nodetext{V22}(0.9,0){$w_2$}

\edge{V21}{V11}
\freetext(0.5,4.5){0}
\edge{V22}{V11}
\edge{V21}{V12}
\freetext(2,3.8){1}
\freetext(4,3.8){1}
\edge{V22}{V12}
\freetext(5.5,4.5){0}

\roundnode{V31}(1,0)
\nodetext{V31}(-0.8,0){$v_3$}
\roundnode{V32}(5,0)
\nodetext{V32}(0.9,0){$w_3$}

\edge{V31}{V21}
\freetext(0.5,1.5){0}
\edge{V32}{V21}
\edge{V31}{V22}
\freetext(2,0.8){1}
\freetext(4,0.8){1}
\edge{V32}{V22}
\freetext(5.5,1.5){0}

\freetext(1,-1){$\vdots$}
\freetext(5,-1){$\vdots$}

\end{graph}
\end{center}
\end{figure}
\vspace{0.4cm}

This system can be modeled as a skew product (in fact a group extension) over a dyadic odometer, as follows, let $X = \{0,1\}^\mathbb{N}$ and $T \colon X \rightarrow X$ be the dyadic adding machine. The phase space of our system is $X\times \{0,1\}$ where $\{0,1\}$ is viewed as the group with addition modulo 2. The cocycle extension sends a point $(x, s) \in X \times \{0,1\}$ to a point $(Tx, s + f(x))$, where $f(x) = 0$ if and only if the first zero in the dyadic expansion of $x$ appears at an even position. The cocycle $f$ is discontinuous at the point $x=111,\dots$, i.e., the skew product transformation cannot be defined continuously at two points in the product space.

We note that, given a Bratteli diagram, the set of orders such that the Vershik map can be uniquely prolonged to a homeomorphism of the whole path-space of the diagram, is studied in \cite{BKY14,BY}.
\end{exam}

\begin{exam}(A non-decisive ordered Bratteli diagram: the Vershik map is uniformly continuous, but cannot be prolonged to a homeomorphism). The following diagram is also stationary and it differs from the diagram in Example~\ref{1} only by the order. It has one maximal and two minimal paths: the maximal path passes through the vertices $\{v_i\}_{i=1}^\infty$, this path is also minimal. The other minimal path passes through the vertices $\{w_i\}_{i=1}^\infty$.
Since $X_{\max}$ and $X_{\min}$ 
have different cardinalities, the Vershik map cannot be prolonged to a homeomorphism of the whole space $X_B$. Nonetheless, the Vershik map can be prolonged to a continuous non-invertible mapping of $X_B$ by sending the maximal path (which is also minimal) to itself.

The corresponding dynamical system can be modeled as the operation $n \mapsto n+1$ acting on the one-point compactification $\mathbb{N}_0 \cup \{\infty\}$. The vertical path which passes through the vertices $\{v_i\}_{i = 1}^\infty$ corresponds to the accumulation point $\{\infty\}$. The vertical path which passes through the vertices $\{w_i\}_{i = 1}^\infty$ corresponds to zero in $\mathbb{N}_0$. The path which passes through the edge labeled $1$ between $w_{n+1}$ and $v_n$ corresponds to the number $2n - 1$ and the path which passes through the edge labeled $2$ corresponds to the number $2n$. The path which corresponds to zero has no preimage under the prolonged Vershik map.

\begin{figure}[ht]
\unitlength = 0.4cm
\begin{center}
\begin{graph}(6,10)
\graphnodesize{0.4}
\roundnode{V0}(3,9)
\nodetext{V0}(0.5,0.5){$v_0$}
\roundnode{V11}(1,6)
\nodetext{V11}(-0.8,0){$v_1$}
\roundnode{V12}(5,6)
\nodetext{V12}(0.9,0){$w_1$}

\edge{V11}{V0}
\edgetext{V11}{V0}{0}
\edge{V12}{V0}
\edgetext{V12}{V0}{0}

\roundnode{V21}(1,3)
\nodetext{V21}(-0.8,0){$v_2$}
\roundnode{V22}(5,3)
\nodetext{V22}(0.9,0){$w_2$}

\edge{V21}{V11}
\freetext(0.5,4.5){0}
\bow{V22}{V11}{-0.09}
\bow{V22}{V11}{0.09}
\bowtext{V22}{V11}{-0.09}{1}
\bowtext{V22}{V11}{0.09}{2}
\edge{V22}{V12}
\freetext(5.5,4.5){0}

\roundnode{V31}(1,0)
\nodetext{V31}(-0.8,0){$v_3$}
\roundnode{V32}(5,0)
\nodetext{V32}(0.9,0){$w_3$}

\edge{V31}{V21}
\freetext(0.5,1.5){0}
\bow{V32}{V21}{-0.09}
\bow{V32}{V21}{0.09}
\bowtext{V32}{V21}{-0.09}{1}
\bowtext{V32}{V21}{0.09}{2}
\edge{V32}{V22}
\freetext(5.5,1.5){0}

\freetext(1,-1){$\vdots$}
\freetext(5,-1){$\vdots$}

\end{graph}
\end{center}
\end{figure}
\vspace{0.4cm}

Notice also that the set $X_{\min}$ has one isolated point and one non-isolated point of $X_B$ while the set $X_{\max}$ consists only of a non-isolated point.
\end{exam}

\medskip
Our goal is to give a sufficient condition for a system to be Bratteli--Vershikizable. In preparation, we provide a passage from an ordered Bratteli diagram to an array representation with markers. This method is taken from \cite{DM08}.

Let $(B,<)=(V,E,<)$ be an arbitrary ordered Bratteli diagram. With each vertex $v\in V_k$ we will associate a $k$-rectangle called a \emph{$k$-symbol}, which will be denoted by the same letter $v$.\footnote{The $k$-symbol is nothing else but a tower in the Kakutani-Rokhlin partition, except that we draw the towers horizontally, and our notation keeps track of how the tower traverses the towers of the preceding generation.} The $k$-symbol $v$ will have $k\!+\!1$ rows (because we enumerate the rows from zero) and width equal to the number of paths with source $v$ and target $v_0$. We declare that in every row number $k$ we will use the alphabet $V'_k = V_k\times\{\emptyset,|\}=\{v, v|: v\in V_k\}$. Here is how we proceed:

With the vertex $v_0$ we associate a $0$-symbol of width $1$ carrying the symbol $v_0|$. Suppose we have defined all $k$-symbols, each being a $k$-rectangle of width equal to the number of outgoing paths.
Then for each vertex $v\in V_{k+1}$ we create the $(k\!+\!1)$-symbol $v$ as follows: in the top $k$ rows it has a concatenation $u_1u_2\dots u_l$ of the $k$-symbols corresponding the targets of the edges with source $v$, arranged in the same order as the paths are ordered in the diagram. Notice that the width of this concatenation equals precisely the number of paths connecting $v$ with $v_0$. Finally, to this concatenation we append the row number $k\!+\!1$ filled with repetitions of the symbol $v$, except that the rightmost symbol is $v|$. The figure below shows a fragment of an ordered diagram and a $2$-symbol corresponding to the vertex $u_1\in V_2$.

\begin{figure}[ht]
\unitlength = 0.4cm
\begin{center}
\begin{graph}(19,15)
\graphnodesize{0.4}
\roundnode{V0}(10,14)
\nodetext{V0}(0.5,0.5){$v_0$}
\roundnode{V11}(1,7)
\nodetext{V11}(1.1,0){$w_1$}
\roundnode{V12}(7,7)
\nodetext{V12}(1.1,0){$w_2$}
\roundnode{V13}(13,7)
\nodetext{V13}(1.1,0){$w_3$}
\roundnode{V14}(19,7)
\nodetext{V14}(1.1,0){$w_4$}

\bow{V11}{V0}{0.06}
\edge{V11}{V0}
\bowtext{V11}{V0}{0.06}{0}
\edgetext{V11}{V0}{1}
\edge{V12}{V0}
\bow{V12}{V0}{0.09}
\bow{V12}{V0}{-0.09}
\bowtext{V12}{V0}{0.09}{0}
\bowtext{V12}{V0}{-0.09}{2}
\edgetext{V12}{V0}{1}
\bow{V13}{V0}{0.06}
\bow{V13}{V0}{-0.06}
\bowtext{V13}{V0}{0.06}{0}
\bowtext{V13}{V0}{-0.06}{1}
\edge{V14}{V0}
\bow{V14}{V0}{0.06}
\bow{V14}{V0}{-0.06}
\bow{V14}{V0}{-0.12}
\bow{V14}{V0}{-0.18}
\bowtext{V14}{V0}{0.06}{0}
\edgetext{V14}{V0}{1}
\bowtext{V14}{V0}{-0.06}{2}
\bowtext{V14}{V0}{-0.12}{3}
\bowtext{V14}{V0}{-0.18}{4}


\roundnode{V21}(6,0)
\nodetext{V21}(-0.8,-0.1){$u_1$}
\roundnode{V22}(14,0)
\nodetext{V22}(0.9,-0.1){$u_2$}

\edge{V21}{V11}
\bow{V21}{V11}{0.09}
\edge{V21}{V12}
\edge{V21}{V14}
\edge{V22}{V11}
\bow{V22}{V11}{0.09}
\bow{V22}{V12}{-0.06}
\edge{V22}{V13}
\edge{V22}{V14}
\freetext(4.5,1.2){1}
\freetext(5.1,1.4){3}
\freetext(6.2,1.3){0}
\freetext(7.1,0.6){2}

\freetext(12,0.4){0}
\freetext(12.6,0.8){1}
\freetext(13.2,1.2){2}
\freetext(13.9,1.3){3}
\freetext(14.8,1.2){4}

\freetext(-1,0){$V_2$}
\freetext(-1,7){$V_1$}
\freetext(-1,14){$V_0$}

\freetext(6,-1){$\vdots$}
\freetext(14,-1){$\vdots$}

\end{graph}
\end{center}
\end{figure}

\begin{align*}
&\boxed{v_0}\boxed{v_0}\boxed{v_0}\boxed{v_0}\boxed{v_0}\boxed{v_0}\boxed{v_0}\boxed{v_0}\boxed{v_0}
\boxed{v_0}\boxed{v_0}\boxed{v_0}\\[-1.5\jot]
&\boxed{w_2\ w_2\ w_2}\boxed{w_1\ w_1\!}
\boxed{\,w_4\ w_4\ w_4\ w_4\ w_4\,}\boxed{\!w_1\ w_1}\\[-1.5\jot]
&\boxed{\ u_1\ \,u_1\ \,u_1\ \ u_1\ \,u_1\ \ u_1\ \,u_1\ \,u_1\ \,u_1\ \,u_1\ \ u_1\ \,u_1\,}
\end{align*}

\bigskip
There is a small technical inconvenience concerning the markers enclosing the $k$-symbol on the left.
Formally, they do not belong to the $k$-symbol. But then, one $k$-symbol could match the suffix of another,
which we do not want to admit. This is why we will always picture the $k$-symbol with the left hand side markers included. In this way a $k$ symbol is never part of a wider one (in the wider one the last row
has no markers inside). When concatenating two $k$-symbols we will always ``glue'' the markers at the contact line and treat them as one marker.

With each path $p\in X_B$ we can now associate an array $x=x(p)=[x_{k,n}]_{k\ge 1,n\in I}$ which has infinitely many rows, while the column numbers range over some interval $I$ of integers, which can be either finite, extending from a nonpositive to a nonnegative number ($I=[n,m]$, $n\le 0\le m$), or one-way infinite ($I=(-\infty, m]$, $m\ge 0$ or $I=[n,\infty)$, $n\le 0$) or both-way infinite ($I=\z$). If $p=(e_1,e_2,\dots)$ then the array $x$ is obtained as the limit of appropriately aligned $k$-symbols $v_0,v_1,v_2,\dots$, where $v_k=s(e_k)$ for $k\ge 1$. The alignment is done according to the following rule: let $n_k$ denote the position of the finite path $(e_1,e_2,\dots,e_k)$ (the ``top'' of the path $p$) among all paths connecting the vertex $v_k$ with $v_0$. Then we place the $k$-symbol $v_k$ so that its $n_k$-th column (counting from the left) sits at the coordinate $0$ of the horizontal axis. It is obvious that so aligned $k$-symbols $v_k$ are consistent, i.e., $v_{k+1}$ completely covers $v_k$ and matches it on the overlap area. This implies that the limit array $x(p)$ is well defined. We endow the space $\tilde X_B=\{x(p): p\in X_B\}$ with the topology inherited from $X_B$ by the bijection $p\mapsto x(p)$.

It is also clear that a path $p$ is maximal (minimal) if and only if the range $I$ of the columns of $x(p)$ ends (starts) at the coordinate $0$. The Vershik map corresponds to the horizontal left shift of the arrays $x(p)$ (which is executable on an array if and only if the corresponding path is not maximal), and the inverse map---to the right shift (on arrays corresponding to all but minimal paths). So, we have a complete array representation of the Vershik map. Question is, when can one uniquely prolong it to all paths.

Now, if the system is Bratteli--Vershikizable, then to each maximal path we can associate a \emph{unique} minimal path which is its image in the prolonged Vershik map (and to each minimal path---a unique maximal path which is its preimage). This means that in the above array representation, to each array whose horizontal domain ends (starts) at zero, we can concatenate a unique array which starts at $1$ (ends at $-1$). By shifting, to each array whose horizontal domain ends (starts) at any place $n$, we can concatenate a unique array which starts at $n+1$ (ends at $n-1$). In this manner, each path $p$ is now represented by a unique full array (with horizontal domain $\z$) which we denote by $x(p)$. We skip the fairly obvious verification, that the two systems: $(X_B, \bar T_V)$ and $(X,T)$, where $X=\{x(p):p\in X_B\}$ and $T$ is the usual horizontal shift, are conjugate. Notice that we have obtained an array representation with a system of markers which is upward adjusted and such that the maximal gap is bounded in each row. The condition that $n^{\min}_k$ grows to infinity with $k$ need not be fulfilled.

We conclude with a new theorem in whose proof we will reverse the construction: from an array representation we will produce a decisive Bratteli--Vershik system.

\begin{thm}\label{deci}
Let \xt\ be an aperiodic zero-dimensional system. Then it admits a decisive Bratteli--Vershik representation.
\end{thm}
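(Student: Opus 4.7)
The plan is to reverse the construction sketched at the end of Section 6: starting from a suitable markered array representation of $(X,T)$, I would build a Bratteli diagram $B$ whose vertices at level $k$ label the $k$-rectangles appearing in $X$, and whose ordered edges between $V_{k+1}$ and $V_k$ record how each $(k+1)$-rectangle is cut by its nested $k$-markers into a canonical left-to-right concatenation of $k$-rectangles.

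First, I would apply Theorem \ref{markers} together with Lemma \ref{meager} to obtain a standard markered representation of $(X,T)$ satisfying conditions (1) and (2) of Theorem \ref{markers} (in particular, $n_k^{\min}\to\infty$) and such that the set of arrays possessing a marker of infinite depth has empty interior. Then set $V_0=\{v_0\}$, let $V_k$ be the (finite) set of distinct $k$-rectangles of $X$, draw the edges as above, and, between $V_1$ and $V_0$, place $|R|$ edges ordered by horizontal position for each $R\in V_1$. The natural coding $\phi:X\to X_B$ sends $x$ to the sequence of edges $e_k(x)$ encoding the position of the $k$-rectangle $v_k(x)$ containing coordinate $0$ inside $v_{k+1}(x)$. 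Continuity of $\phi$ is immediate since $e_k(x)$ depends on a bounded window around coordinate $0$, and $\phi\circ T=T_V\circ\phi$ wherever $T_V$ is defined.

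The crucial input from aperiodicity is the observation that, since $F_k$ is $n_k$-separated with $n_k\to\infty$, two distinct visits of any orbit to $\bigcap_k F_k$ are impossible; hence each array contains at most one marker of infinite depth. Using this, I would argue that the shift $T$ on $X$ pairs maximal and minimal paths single-valuedly: a maximal path arises from an $x$ whose unique deep marker sits at position $0$, and $Tx$ (whose deep marker has shifted to position $-1$) produces the matching minimal path. This pairing is exactly what is required to extend the partial Vershik map to a self-homeomorphism $\bar T_V$ of $X_B$ realizing $(X,T)$ up to conjugacy.

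Finally, decisiveness is certified via Lemma \ref{decisiveness}: uniform continuity of $T_V$ and $T_V^{-1}$ follows because two paths agreeing through level $k$ correspond to arrays agreeing on a horizontal window of width at least $n_k^{\min}\to\infty$; and $X_{\max}\cup X_{\min}$ corresponds, via $\phi$, to the set of arrays carrying a deep marker at position $0$ or $-1$, which has empty interior by the choice of markers. The main obstacle will be controlling the max-to-min pairing: when two orbits carry distinct unique deep markers but share a long common past, the vertex set $V_k$ consisting of mere $k$-rectangle patterns may be too coarse to distinguish their paths, so a careful refinement of either the marker system or of the vertex labeling of $B$ will be needed before the pairing becomes single-valued and decisiveness follows.
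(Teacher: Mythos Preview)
Your plan matches the paper's starting point exactly, and you have correctly located the obstruction: the paper explicitly builds the ``naive'' diagram whose level-$k$ vertices are $k$-rectangles and then states that this diagram is \emph{not} decisive. So your last paragraph is not a minor technicality to be cleaned up later; it is the whole content of the theorem.

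Two concrete problems with the naive diagram. First, your map $\phi:X\to X_B$ is not injective. If $x$ has its (unique) deep marker at position $1$, the nested $k$-rectangles containing the origin eventually have their right edge frozen at position $0$, so $\phi(x)$ records only $x|_{(-\infty,0]}$. Any $x'$ agreeing with $x$ on $(-\infty,0]$ but differing on $[1,\infty)$ gives the same maximal path. Hence $(X_B,\bar T_V)$, whatever $\bar T_V$ is, is at best a factor of $(X,T)$, not a conjugate model. Second, your uniform-continuity argument fails for exactly this reason: two non-maximal paths agreeing through level $K$ do correspond to arrays agreeing on the common $K$-rectangle, but if the origin sits at the right end of that rectangle, position $1$ lies in the \emph{next} $K$-rectangle, which is not determined by the first $K$ edges. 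The $T_V$-images then need not agree even at level $1$, so $T_V$ is not uniformly continuous on its domain.

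The paper's fix is a specific refinement of the vertex labeling that you should adopt: replace $k$-rectangles by \emph{$k$-trapezoids}, where a $k$-trapezoid is the $k$-rectangle enlarged in rows $1,\dots,k-1$ by the adjacent $(k-1)$-rectangles on both sides, then in rows $1,\dots,k-2$ by two more $(k-2)$-rectangles, and so on. The edges from a $(k{+}1)$-trapezoid go only to its \emph{internal} $k$-trapezoids (those extending the $k$-rectangles inside the underlying $(k{+}1)$-rectangle). The point is that the trapezoids overflow the rectangle on both sides, so the nested sequence of $k$-trapezoids associated to \emph{any} path---maximal or minimal included---converges to a full array with horizontal domain $\z$. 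This makes $\phi$ a homeomorphism $X_B\to X$; the Vershik map then literally becomes the shift, so uniform continuity of $T_V$ and $T_V^{-1}$ is inherited from $T$ and $T^{-1}$. Maximal and minimal paths correspond precisely to arrays with a deep marker at the origin, and Lemma~\ref{meager} gives that this set has empty interior, so Lemma~\ref{decisiveness} applies. Your outline becomes a proof once you swap rectangles for trapezoids.
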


\begin{proof}
We begin with an array representation with markers satisfying (1) and (2) of Theorem \ref{markers} and the assertion of Lemma \ref{meager}. It seems that to define an ordered Bratteli diagram it suffices to use the $k$-rectangles as vertices and let the edges connect each $(k\!+\!1)$-rectangle $R$ to its component $k$-rectangles appearing in the top $k$ row, and finally to order the edges following the natural order of the $k$-rectangles in the concatenation appearing in $R$. Such a naive idea indeed produces an ordered Bratteli diagram $(B',<')$ such that the Vershik map (wherever defined) agrees with the shift $T$ on $X$. Unfortunately, the method does not produce a \emph{decisive} diagram. For that, we need to include, in each vertex of the diagram, some more ``information'' from the array representation.

Namely, we create somewhat artificial objects which we will call \emph{$k$-trapezoids}, as follows: a $k$-trapezoid (appearing in some $x\in X$) consists of a $k$-rectangle enlarged in rows $1$ through $k\!-\!1$ by two $(k\!-\!1)$-rectangles (one on each side), then, in rows $1$ through $k\!-\!2$ by two
more $(k\!-\!2)$-rectangles (one on each side), etc. The figure below shows a $3$-trapezoid.
\begin{center}
\includegraphics[width=12cm]{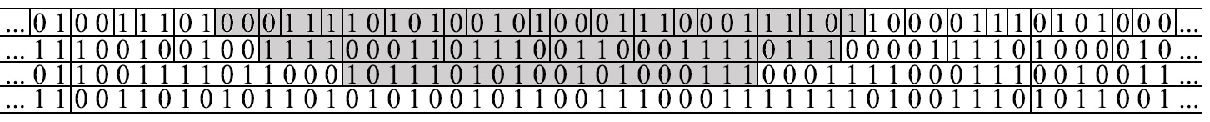}
\end{center}
Notice that while each $(k\!+\!1)$-rectangle $R$ has in its top $k$ rows a concatenation of $k$-rectangles, say $R_1,R_2,\dots,R_q$, the $(k\!+\!1)$-trapezoid $S$ which extends $R$ has in its top $k$ rows an ``overlapping concatenation'' of two more $k$-trapezoids, say $S_0,S_1,\dots,S_q,S_{q+1}$.
We will call the $k$-trapezoids $S_1,\dots,S_q$ \emph{internal}, while $S_0$ and $S_{q+1}$ will be called \emph{external}. The internal $k$-trapezoids extend the $k$-rectangles included in $R$, the external ones do not.

Now, we create a new ordered Bratteli diagram $(B,<)$: its vertices of level $k$ are the $k$-trapezoids, the edges connect a vertex corresponding to a $(k\!+\!1)$-trapezoid $S$ with its \emph{internal} $k$-trapezoids, the order of the edges is natural, as before. As a matter of fact, the new diagram projects to the naive diagram $(B',<')$ by identifying all vertices (i.e., $k$-trapezoids) which extend the same $k$-rectangle.

Now we will argue that the new diagram is decisive. Notice that this time each infinite path (including maximal and minimal) represents a \sq\ of $k$-trapezoids which converge to a \emph{full} array (with full horizontal domain $\z$; no arrays are produced whose horizontal domain is bounded on one or both sides). So, the natural mapping from the path space $X_B$ to the arrays is onto the array representation of $X$.
It is clear that this is a homeomorphism, the Vershik map (wherever defined) corresponds to the horizontal shift. Thus the Vershik map and its inverse are uniformly continuous. Moreover, maximal and minimal paths correspond to arrays with a marker of infinite depth (extending through all rows). Thus, by Lemma \ref{meager}, the set of such paths is of I-st category, hence its complement is dense and the two
conditions of decisiveness are fulfilled as in Lemma~\ref{decisiveness}.
\end{proof}

In~\cite{M06} it was proved that every aperiodic homeomorphism of a Cantor set has a Bratteli--Vershik representation such that the number of paths between any vertex $v \in V_n$ and the vertex $v_0$ tends to infinity as $n$ grows. In other words, every path in such diagram has infinitely many cofinal paths and the cofinal equivalence relation is aperiodic. The following example shows that such a diagram need not be decisive.

\begin{exam}\label{medynets}(A non-decisive ordered Bratteli diagram: the cofinal equivalence relation is aperiodic, yet the Vershik map can be prolonged in many different ways). 
Consider the following diagram.

\begin{figure}[ht]
\unitlength = 0.4cm
\begin{center}
\begin{graph}(30,15)
\graphnodesize{0.4}
\roundnode{V0}(15,14)
\freetext(15.5,14.6){$v_0$}
\roundnode{V11}(5,11)
\nodetext{V11}(-0.6,0.4){$u$}
\roundnode{V12}(15,11)
\nodetext{V12}(-0.6,0.4){$v$}
\roundnode{V13}(25,11)
\nodetext{V13}(0.6,0.4){$w$}

\edge{V11}{V0}
\edgetext{V11}{V0}{0}
\edge{V12}{V0}
\edgetext{V12}{V0}{0}
\edge{V13}{V0}
\edgetext{V13}{V0}{0}

\roundnode{V21}(3,6)
\roundnode{V22}(7,6)
\roundnode{V23}(15,6)
\roundnode{V24}(23,6)
\roundnode{V25}(27,6)


\bow{V23}{V12}{-0.06}
\bow{V23}{V12}{0.06}
\freetext(14.3,8.5){0}
\freetext(15.7,8.5){1}

\edge{V21}{V11}
\edgetext{V21}{V11}{0}
\edge{V22}{V11}
\edgetext{V22}{V11}{0}
\edge{V21}{V12}
\edgetext{V21}{V12}{1}
\edge{V22}{V12}
\edgetext{V22}{V12}{1}

\edge{V24}{V12}
\edgetext{V24}{V12}{0}
\edge{V24}{V13}
\edgetext{V24}{V13}{1}
\edge{V25}{V12}
\edgetext{V25}{V12}{0}
\edge{V25}{V13}
\edgetext{V25}{V13}{1}


\roundnode{V31}(2,1)
\roundnode{V32}(4,1)
\roundnode{V33}(6,1)
\roundnode{V34}(8,1)

\roundnode{V35}(15,1)

\roundnode{V36}(22,1)
\roundnode{V37}(24,1)
\roundnode{V38}(26,1)
\roundnode{V39}(28,1)

\freetext(2,0){$\vdots$}
\freetext(4,0){$\vdots$}
\freetext(6,0){$\vdots$}
\freetext(8,0){$\vdots$}
\freetext(15,0){$\vdots$}
\freetext(22,0){$\vdots$}
\freetext(24,0){$\vdots$}
\freetext(26,0){$\vdots$}
\freetext(28,0){$\vdots$}


\bow{V35}{V23}{-0.06}
\bow{V35}{V23}{0.06}
\freetext(14.3,3.5){0}
\freetext(15.7,3.5){1}

\edge{V31}{V21}
\edgetext{V31}{V21}{0}
\edge{V31}{V23}
\edgetext{V31}{V23}{1}

\edge{V32}{V21}
\edgetext{V32}{V21}{0}
\edge{V32}{V23}
\edgetext{V32}{V23}{1}

\edge{V33}{V22}
\edgetext{V33}{V22}{0}
\edge{V33}{V23}
\edgetext{V33}{V23}{1}

\edge{V34}{V22}
\edgetext{V34}{V22}{0}
\edge{V34}{V23}
\edgetext{V34}{V23}{1}

\edge{V36}{V24}
\edgetext{V36}{V24}{1}
\edge{V36}{V23}
\edgetext{V36}{V23}{0}

\edge{V37}{V24}
\edgetext{V37}{V24}{1}
\edge{V37}{V23}
\edgetext{V37}{V23}{0}

\edge{V38}{V25}
\edgetext{V38}{V25}{1}
\edge{V38}{V23}
\edgetext{V38}{V23}{0}

\edge{V39}{V25}
\edgetext{V39}{V25}{1}
\edge{V39}{V23}
\edgetext{V39}{V23}{0}

\end{graph}
\end{center}
\end{figure}

Any path in $X_B$ has infinitely many cofinal paths. Hence the Vershik map has infinite orbits and is aperiodic. Any path which passes through the vertex $u$ is minimal, while every path which passes through the vertex $w$ is maximal. There is also one more minimal and maximal paths passing through the vertex $v$. Hence the sets of minimal and maximal paths are homeomorphic and have non-empty interiors. In order to be continuous, the prolongation of the Vershik map should map the maximal path passing through $v$ to the minimal path passing through $v$. On remaining maximal paths it can be prolonged in many ways. This dynamical system can be also described as a map acting on the product of the Cantor set $X$ with the set $\{-1,-\frac12,-\frac13,\dots,0,\dots,\frac13,\frac12,1\}$. 
The map acts as follows: $(x, -\frac{1}{n}) \mapsto (Tx, -\frac{1}{n+1})$, $(x,0) \mapsto (Tx, 0)$, and $(x, \frac{1}{n}) \mapsto (Tx, \frac{1}{n-1})$, where $T$ is the dyadic adding machine acting on $X$. This map can be prolonged using any homeomorphism between the sets $(X, 1)$ and $(X,-1)$.
\end{exam}

Of course, in the context of Theorem \ref{deci} it is natural to ask about a condition equivalent to Bratteli--Vershikizability. The research in this direction is in progress and we expect to be able to give an answer soon.
\medskip

\textbf{Acknowledgement.}
The research of both authors is supported by the NCN (National Science Center, Poland) Grant 2013/08/A/ST1/00275


\end{document}